\documentclass[12pt,amscd]{amsart}
\footskip =0.7cm
\textwidth=15.3cm
\textheight=21cm
\oddsidemargin=0.5cm
\evensidemargin=0.5cm
\pagestyle{plain}

\usepackage[all]{xy}
\usepackage{graphicx,tikz-cd}
\usepackage{amsmath,amsxtra,amssymb,latexsym, amscd,amsthm}
\usepackage{indentfirst}
\usepackage[mathscr]{eucal}
\usepackage[pagebackref=true]{hyperref}


\newtheorem{thm}{Theorem}[section]
\newtheorem{cor}[thm]{Corollary}
\newtheorem{lem}[thm]{Lemma}

\theoremstyle{definition}
\newtheorem{defn}[thm]{Definition}
\newtheorem{exm}[thm]{Example}

\newtheorem{rem}[thm]{Remark}

\numberwithin{equation}{section}

\DeclareMathOperator{\ZZ}{\mathbb {Z}}

\DeclareMathOperator{\Tor}{Tor}
\DeclareMathOperator{\ann}{ann}

\DeclareMathOperator{\pd}{pd}
\DeclareMathOperator{\supp}{supp}
\DeclareMathOperator{\type}{type}

\DeclareMathOperator{\reg}{reg}

\def\P {\mathbf P}

\def\s {\mathbf s}

\def\m {\mathfrak m}

\def\k {\mathrm{k}}


\begin{document}

\title{Cochordal zero divisor graphs and Betti numbers of their edge ideals}

\author[L. X. Dung]{Le Xuan Dung}
\address{Department of Algebra and Geometry, Hong Duc University, No. 565 Quang Trung Street, Dong Ve Ward, Thanh Hoa, Vietnam}
\email{lexuandung@hdu.edu.vn}

\author{Thanh Vu}
\address{Institute of Mathematics, VAST, 18 Hoang Quoc Viet, Hanoi, Vietnam}
\email{vuqthanh@gmail.com}

\subjclass[2020]{13A70, 13D02, 05E40}
\keywords{cochordal graph; Betti numbers; zero divisor graph}

\date{}

\commby{}

\begin{abstract}
    We associate a sequence of positive integers, termed the type sequence, with a cochordal graph. Using this type sequence, we compute all graded Betti numbers of its edge ideal. We then classify all positive integer $n$ such that the zero divisor graph of $\ZZ/n \ZZ$ is cochordal and determine all the graded Betti numbers of its edge ideal.
\end{abstract}

\maketitle

\section{Introduction}
\label{sect_intro}

Rather, Imran, and Pirzada \cite{RIP} recently provided formulae for the Betti numbers of the edge ideals of zero divisor graphs of $\ZZ/n\ZZ$, denoted by $\Gamma(\ZZ_n)$ when $n$ is of the form $p^4$, $p^2q$, and $pqr$ where $p,q,r$ are distinct prime numbers. These zero divisor graphs are all cochordal. Equivalently, by the result of Fr\"oberg \cite{F}, their edge ideals have a linear free resolution. Motivated by this result, we first classify all $n$ for which the zero divisor graph of $\ZZ/n\ZZ$ is cochordal.

\begin{thm}\label{thm_cochordal_zero_div} Let $n \ge 2$ be a positive integer. Then $\Gamma(\ZZ_n)$ is cochordal if and only if $n$ is one of the following forms:
\begin{enumerate}
    \item $n = p^a$,
    \item $n = p^a q$,
    \item $n = p qr$,
\end{enumerate}
where $p,q,r$ are distinct prime numbers and $a$ is a positive integer.
\end{thm}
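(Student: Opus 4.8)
The plan is to use the standard reformulation that a graph $G$ is cochordal if and only if its complement $\overline{G}$ is chordal, equivalently $G$ has no induced $\overline{C_k}$ for $k\ge 4$; the smallest such obstruction is $\overline{C_4}=2K_2$, a pair of disjoint edges spanning no further edge. Writing $n=p_1^{a_1}\cdots p_s^{a_s}$ with distinct primes $p_i$ and letting $v_p$ denote the $p$-adic valuation, I first record the blow-up structure of $\Gamma(\ZZ_n)$: assigning to a vertex $x$ the capped exponent vector $\c(x)=(\min(v_{p_i}(x),a_i))_{i=1}^s$, two vertices $x,y$ satisfy $xy=0$ exactly when $c_i(x)+c_i(y)\ge a_i$ for all $i$, so adjacency depends only on the class $\c$. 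The classes are indexed by the vectors $\c$ with $\mathbf 0\ne\c\ne\a$, where $\a=(a_1,\dots,a_s)$; the class $V_{\c}$ has $\varphi(n/d)$ vertices with $d=\prod p_i^{c_i}$, and it is a clique of $\Gamma(\ZZ_n)$ when $n\mid d^2$ (that is, $2c_i\ge a_i$ for all $i$) and an independent set otherwise. Thus $\Gamma(\ZZ_n)$ is the blow-up of the finite graph $R$ on these classes, with $\c\sim_R\c'$ iff $c_i+c_i'\ge a_i$ for all $i$, each vertex expanded into a clique or an independent set as above.

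\emph{Necessity} (if $\Gamma(\ZZ_n)$ is cochordal then $n$ has one of the three forms) I will prove by contraposition. If $n$ is not of these forms it falls into one of three families, and in each I exhibit an explicit induced $2K_2$, using that every class $V_{\c}$ with $\mathbf 0\ne\c\ne\a$ is nonempty. (A) If $n$ has at least four prime factors, fix four of them and take the classes $(a_1,a_2,0,0,\dots)$, $(0,0,a_3,a_4,\dots)$, $(a_1,0,a_3,0,\dots)$, $(0,a_2,0,a_4,\dots)$, all remaining coordinates set to $a_m$; the first two and the last two form edges, with no other adjacencies. (B) If $n=p^aq^br^c$ with, say, $a\ge 2$, take the classes $(a,0,0)$, $(0,b,c)$, $(a-1,b,0)$, $(a-1,0,c)$. (C) If $n=p^aq^b$ with $a,b\ge 2$, take a vertex of $V_{(a,0)}$ and one of $V_{(0,b)}$ for the first edge, and two distinct vertices of the clique class $V_{(a-1,b-1)}$ for the second, using $\varphi(pq)=(p-1)(q-1)\ge 2$ since $p\ne q$. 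In each case a direct check of the defining inequalities shows the four chosen vertices induce a $2K_2$, so $\Gamma(\ZZ_n)$ is not cochordal.

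\emph{Sufficiency} I will handle through the blow-up picture. The key tool is a substitution lemma asserting that $\Gamma(\ZZ_n)$ is cochordal if and only if (i) $R$ is cochordal, (ii) every clique class $V_{\c}$ with $\varphi(n/d)\ge 2$ is a simplicial vertex of $\overline{R}$, and (iii) any two distinct such heavy clique classes are adjacent in $R$. The proof of the lemma analyzes an induced $C_{\ge 4}$ in $\overline{\Gamma(\ZZ_n)}$: if it meets each class at most once it projects to an induced $C_{\ge 4}$ of $\overline{R}$, forbidden by (i); a clique class (true twins) can never contribute two vertices to an induced cycle; and two vertices of an independent class (false twins) force a four-cycle whose two remaining classes are either distinct and non-adjacent in $\overline{R}$, contradicting (ii), or equal and heavy, contradicting (iii). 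I then verify (i)--(iii) for each good form. For $n=p^a$ the graph $R$ on $\{1,\dots,a-1\}$ with $i\sim i'\iff i+i'\ge a$ is a threshold graph, hence cochordal, and the heavy clique classes $i\ge\lceil a/2\rceil$ are simplicial in $\overline{R}$ and pairwise $R$-adjacent. For $n=pqr$ there are no clique classes, and $R$ is the net graph---a triangle on the classes $V_{pq},V_{pr},V_{qr}$ with one prime class attached as a pendant to each---whose complement is chordal, the three pair-classes being simplicial in $\overline{R}$. For $n=p^aq$ I will produce a perfect elimination ordering of $\overline{R}$ by decreasing $p$-exponent, the needed cliques following from the single inequality $i'+i''< i+(a-i)=a$, and the heavy clique classes $(i,1)$ with $i\ge\lceil a/2\rceil$ again satisfy (ii)--(iii).

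\emph{The main obstacle} is sufficiency, specifically the case $n=p^aq$ together with the bookkeeping in the substitution lemma: here $2K_2$-freeness alone does not suffice, and one must certify that $\overline{R}$ (hence $\overline{\Gamma(\ZZ_n)}$) has no induced cycle of any length $\ge 4$, which I plan to do by the explicit elimination ordering above. Establishing the substitution lemma cleanly---separating the harmless clique (true-twin) expansions from the independent-set (false-twin) expansions that can manufacture a $C_4$ invisible at the level of $R$---is the other point requiring care; once it is in place, the three good forms reduce to the finite, checkable verifications just described.
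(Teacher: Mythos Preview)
Your necessity argument is essentially the paper's: both proceed by contraposition and exhibit an induced $2K_2$ in each bad case. Your case (C) is in fact cleaner than the paper's, which manufactures a second vertex via an auxiliary prime divisor of $pq-1$, whereas you simply take two vertices of the clique class $V_{(a-1,b-1)}$.

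For sufficiency your route is genuinely different. The paper proves cochordality of $\Gamma(\ZZ_n)$ in each of the three good forms by writing down an explicit cochordal constructible system (Lemmas~3.2--3.4), i.e.\ a sequence of star–plus–cover decompositions of the edge ideal, and then invoking Theorem~\ref{thm_cochordal_constructible}. This is more hands-on but has the payoff that it simultaneously produces the type sequence, which is exactly the input needed for the Betti-number formula of Theorem~\ref{thm_betti}; the paper gets Theorems~\ref{thm_betti_div_1}--\ref{thm_betti_div_3} essentially for free. Your approach through the compressed graph $R=\Gamma_E(\ZZ_n)$ and a blow-up/substitution lemma is more structural and reusable (it cleanly separates the finite combinatorics of $R$ from the multiplicities $\varphi(n/d)$), but it does not by itself yield the type sequence.

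There is, however, a genuine slip in your sketch of the substitution lemma. You have interchanged the roles of the two kinds of classes. A clique class of $\Gamma$ (true twins in $\Gamma$) becomes a set of \emph{false} twins in $\overline{\Gamma}$, and false twins \emph{can} contribute two vertices to an induced $C_4$ of $\overline{\Gamma}$; this is precisely the case that forces your conditions (ii) and (iii). Conversely, an independent class of $\Gamma$ (false twins in $\Gamma$) becomes a set of \emph{true} twins in $\overline{\Gamma}$, and two true twins can never both lie on an induced $C_{\ge 4}$; these classes are the harmless ones. So the sentence ``a clique class (true twins) can never contribute two vertices to an induced cycle; and two vertices of an independent class (false twins) force a four-cycle\dots'' should have ``clique'' and ``independent'' swapped. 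Your stated conditions (i)--(iii) are correct as written, and your subsequent verifications for $n=p^a$, $p^aq$, $pqr$ check the right objects (the heavy clique classes), so the error is confined to the justification of the lemma; once you swap the two sub-cases, the argument goes through.
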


We then associate a new invariant to each cochordal graph, called the type sequence. Using the type sequence of a cochordal graph, we compute all the graded Betti numbers of its edge ideal. Consequently, we derive formulae for the Betti numbers of the edge ideals of zero divisor graphs of $\ZZ/n\ZZ$ when $n$ is of the form $n = p^a, n = p^a q$, or $n = pqr$ where $p,q,r$ are distinct prime numbers. Let us now introduce these concepts in more detail.

Beck \cite{B} introduced the zero divisor graph $\Gamma (R)$ of a commutative ring $R$ and studied the finitenistic of the colorings of $\Gamma(R)$ and its algebraic consequences. The structure of zero divisor graphs was furthered analyzed and developed by Anderson and Livingston \cite{AL} and Mulay \cite{M} among others. They particularly focused on a smaller induced subgraph defined by the equivalence classes of zero divisors of $R$, known as the compressed zero divisor graph of $R$. Spiroff and Wickham \cite{SW} carried this study further to investigate the associated primes of $R$. We will build on this idea  further in our study.

\begin{defn}
    Let $R$ be a commutative ring with unity. Denote by $Z(R)$ the set of zero divisors of $R$. The zero divisor graph of $R$, denoted by $\Gamma(R)$ is a graph on the vertex set $Z(R) \backslash \{0\}$ and $\{u,v\}$ is an edge of $\Gamma(R)$ if $uv = 0$.
\end{defn}

Arunkumara, Cameron, Kavaskar, and Tamizh Chelvam \cite{ACKT} recently proved that zero divisor graphs are universal, even when $R$ is restricted to boolean rings, or the rings of integers modulo $n$. In other words, for any finite graph $G$, there exists a positive integer $n$ such that $G$ is an induced subgraph of $\Gamma(\ZZ_n)$. As a first step toward understand the homological invariants of edge ideals of $\Gamma(\ZZ_n)$, we classify all $n$ for which $\Gamma_n$ is cochordal. 

\begin{defn}
    A simple graph $G$ is called a chordal graph if every cycle of length at least $4$ in $G$ has a chord. $G$ is cochordal if its complement $\overline{G}$ is chordal.
\end{defn}
Let $G$ be a simple graph on a vertex set $V(G)$ and edge set $E(G)$. A subset $U \subseteq V(G)$ is called a vertex cover of $G$ if for every edge $\{u,v\} \in E(G)$ either $u$ or $v$ belongs to $U$. We denote by $K_{U,V}$ the complete bipartite graph with a bipartition $V(G) = U \cup V$. When $U = \{u\}$ we also denote $K_{U,V}$ by $K_{u,V}$. Motivated by the Dirac's theorem \cite{Di} on chordal graphs, we define
\begin{defn} Let $\P = (u_k,U_k, u_{k-1}, U_{k-1}, \ldots, u_1, U_1)$ be an ordered set of vertices $u_i$ and subsets $U_i \subseteq V$ of a set of vertices $V$ such that $u_i \notin U_j$ for $j \le i$. We define recursively the graphs $G_i$ as follows
\begin{enumerate}
    \item $G_1 = K_{u_1,U_1}$;
    \item $V(G_j) = V(G_{j-1}) \cup V(K_{u_j,U_j})$, $E(G_j) = E(G_{j-1}) \cup E(K_{u_j,U_j})$.
\end{enumerate} 
We call $\P$ a cochordal constructible system of a graph $G$ if $G = G_k$ and $U_j$ is a vertex cover of $G_{j-1}$ for all $j = 2, \ldots, k$. The type of $\P$ is defined by $\type(P) = (a_k, \ldots, a_1)$ where $a_j = |U_j|$ for all $j = 1, \ldots, k$.
\end{defn}

\begin{thm}\label{thm_cochordal_constructible} A graph $G$ is cochordal if and only if it has a cochordal constructible system $\P$. 
\end{thm}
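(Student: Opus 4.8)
The plan is to translate the statement into the language of the complement graph $\overline{G}$ and of \emph{perfect elimination orderings}, and to show that a cochordal constructible system is essentially the same data as such an ordering of $\overline{G}$. Recall that by Dirac's theorem \cite{Di} a graph is chordal if and only if it admits a perfect elimination ordering, i.e. an ordering $w_1, \ldots, w_n$ of its vertices such that for every $i$ the set $N(w_i) \cap \{w_{i+1}, \ldots, w_n\}$ is a clique (equivalently, one may repeatedly delete a simplicial vertex). The bridge between the two pictures is the following elementary observation, which I would isolate as a lemma: if $H$ is a graph and $u \notin V(H)$, and $H'$ denotes the graph obtained from $H$ by adding $u$ together with the edges of $K_{u,U}$ for some $U \subseteq V(H)$, then $U$ is a vertex cover of $H$ if and only if $u$ is a simplicial vertex of $\overline{H'}$. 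Indeed, the neighbours of $u$ in $\overline{H'}$ are exactly $V(H) \setminus U$, and this set is a clique in $\overline{H'}$ precisely when it is independent in $H$, which is exactly the condition that $U$ cover every edge of $H$. I would also record the trivial but essential identity $\overline{H'} - u = \overline{H}$.

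For the implication that a system forces cochordality, I would first check the bookkeeping that makes everything consistent. Since $U_j$ is a vertex cover of $G_{j-1}$ we have $U_j \subseteq V(G_{j-1})$ for $j \ge 2$, and the condition $u_i \notin U_j$ for $j \le i$ (together with distinctness of the $u_i$) gives $V(G_i) = \{u_1, \ldots, u_i\} \cup U_1$ and $u_i \notin V(G_{i-1})$; in particular each $u_i$ is genuinely new when it is added and $N_{G_i}(u_i) = U_i$. Because every edge created at a step $j > i$ is incident to $u_j \notin V(G_i)$, no edge is ever added inside $V(G_i)$, so $G_i$ is the induced subgraph $G[V(G_i)]$ and hence $\overline{G_i} = \overline{G}[V(G_i)]$. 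Applying the lemma with $H = G_{i-1}$, $H' = G_i$, $u = u_i$, $U = U_i$ shows that $u_i$ is simplicial in $\overline{G_i}$ and that deleting it yields $\overline{G_{i-1}}$ (for $i = 1$ note that $u_1$ is adjacent to all of $U_1$, so it is trivially simplicial in $\overline{G_1}$). Therefore deleting $u_k, u_{k-1}, \ldots, u_1$ in turn is the start of a perfect elimination ordering of $\overline{G}$; after these deletions one is left with the clique on $U_1$, which is eliminated in any order. Thus $\overline{G}$ is chordal and $G$ is cochordal.

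For the converse I would start from a perfect elimination ordering $w_1, \ldots, w_n$ of $\overline{G}$, provided by Dirac's theorem, and simply build $G$ in the reverse order. Concretely, set $u_i := w_{n+1-i}$ and, for $i \ge 2$, $U_i := N_G(u_i) \cap \{u_1, \ldots, u_{i-1}\}$ (with $U_1 := \emptyset$), so that $G_i = G[\{u_1, \ldots, u_i\}]$. The simpliciality of $w_{n+1-i}$ in $\overline{G}[\{w_{n+1-i}, \ldots, w_n\}]$ says exactly that the non-neighbours of $u_i$ among $\{u_1, \ldots, u_{i-1}\}$ form an independent set of $G$, i.e. that $U_i$ is a vertex cover of $G_{i-1}$; and the constraints $u_i \notin U_j$ for $j \le i$ hold because the elements of $U_j$ all have strictly smaller index than $u_i$. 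Hence $\P = (u_n, U_n, \ldots, u_1, U_1)$ is a cochordal constructible system with $G = G_n$.

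The routine parts are the two verifications that the vertex-cover condition matches simpliciality; I expect the only real care to be needed in the first direction, namely establishing that each $G_i$ is an \emph{induced} subgraph of $G$, so that the elimination performed in $\overline{G}$ genuinely passes through the graphs $\overline{G_i}$ rather than some larger graphs. (In the second direction one should note that the literal definition forces $U_1 = \emptyset$ when $u_1$ is the first vertex added; this is harmless for the equivalence, though it explains why one normally records the type sequence only for the later, nonempty steps.)
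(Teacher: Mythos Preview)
Your argument is correct and takes a genuinely different route from the paper's own proof. You work entirely on the combinatorial side: the key observation that $U$ is a vertex cover of $H$ if and only if the new vertex $u$ is simplicial in $\overline{H'}$ lets you identify a cochordal constructible system with (the reverse of) a perfect elimination ordering of $\overline{G}$, and then both directions reduce to Dirac's theorem. The only delicate bookkeeping---that each $G_i$ is the \emph{induced} subgraph of $G$ on $\{u_1,\ldots,u_i\}\cup U_1$---you handle correctly using $u_j\notin U_1$ for $j>i$ and the (implicitly assumed) distinctness of the $u_i$. The paper, by contrast, argues algebraically: for the forward direction it quotes \cite[Proposition~3.19]{JV} to find a variable $x_i$ with $I(G):x_i$ prime (which, unwound, is exactly your simplicial-vertex condition), and for the converse it invokes Fr\"oberg's theorem and shows $\reg I(G)=2$ via the Betti splitting $I(G_k)=x_kP_k+I(G_{k-1})$ with $x_kP_k\cap I(G_{k-1})=x_kI(G_{k-1})$. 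Your approach is more elementary and self-contained; the paper's approach has the advantage that the Betti splitting it establishes is precisely the tool reused in the proof of Theorem~\ref{thm_betti}, so it does double duty. Your closing remark about $U_1=\emptyset$ is harmless but slightly misstated: the definition does not force $U_1=\emptyset$; rather, your particular construction (taking $k=n$) chooses it, yielding a longer system than the paper's inductive one but still a valid cochordal constructible system.
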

\begin{defn}
    The type of a cochordal constructible system $\P$ of a cochordal graph $G$ is also called a type of $G$.
\end{defn}
The notion of type of a cochordal graph is motivated from the work of Corso and Nagel \cite{CN1, CN2}. For example, the Ferrers graph $G_\lambda$ associated with a partition $\lambda$ has type $\lambda$. The following is a generalization of the results of Corso and Nagel to arbitrary cochordal graphs.

\begin{thm}\label{thm_betti} The Betti numbers of the edge ideal of a cochordal graph of type $(a_k, \ldots, a_1)$ is given by 
$$\beta_i(S/I) = \binom{a_k}{i} + \binom{a_{k-1}+1}{i} + \cdots + \binom{a_1 + k-1}{i} - \binom{k}{i+1}$$
for all $i \ge 1$.
\end{thm}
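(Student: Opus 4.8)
The plan is to induct on the length $k$ of a cochordal constructible system $\P=(u_k,U_k,\ldots,u_1,U_1)$ realizing the type, proving simultaneously that $I=I(G_k)$ has a $2$-linear resolution and that its Betti numbers obey the stated formula. For the base case $k=1$ the graph $G_1=K_{u_1,U_1}$ is a star, so $I(G_1)=u_1\cdot(v:v\in U_1)$ is, up to the degree shift by $u_1$, the ideal generated by the $a_1$ variables indexed by $U_1$; its resolution is the (shifted) Koszul complex and $\beta_i(S/I(G_1))=\binom{a_1}{i}$, which agrees with $\binom{a_1}{i}-\binom{1}{i+1}$ for $i\ge 1$.

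For the inductive step I would set $J=I(G_{k-1})$, let $\q=(v:v\in U_k)$, and write $L=I(K_{u_k,U_k})=u_k\q$, so that $I=J+L$. Here $G_{k-1}$ is built from the truncated system $(u_{k-1},U_{k-1},\ldots,u_1,U_1)$, of type $(a_{k-1},\ldots,a_1)$, so the inductive hypothesis applies to $J$. First I would record that $u_k$ is a variable not dividing any generator of $J$: the $u_i$ are distinct and $u_k\notin U_j$ for all $j$, whence $u_k\notin V(G_{k-1})$. The crucial computation is the intersection $J\cap L$. Since $U_k$ is a vertex cover of $G_{k-1}$, every generator (edge) of $J$ is divisible by some $v\in U_k$, so $J\subseteq\q$; combined with $u_k$ being coprime to all generators of $J$ this yields $J\cap L=u_k(J\cap\q)=u_kJ$. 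Thus $J\cap L\cong J(-1)$, and its graded Betti numbers are those of $J$ shifted by one in internal degree.

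The homological heart is a Mayer--Vietoris analysis of
$$0\longrightarrow J\cap L\longrightarrow J\oplus L\longrightarrow I\longrightarrow 0.$$
By the inductive hypothesis $J$ has a $2$-linear resolution, i.e. $\Tor_i(J,\k)_m\neq 0$ only for $m=i+2$; the same holds for $L\cong\q(-1)$ (shifted Koszul); and by the previous paragraph $\Tor_i(J\cap L,\k)_m\neq 0$ only for $m=i+3$. I would then read the long exact $\Tor(-,\k)$ sequence one internal degree $m$ at a time. The only possibly nonzero source terms in degree $m$ are $\Tor_{m-2}(J\oplus L,\k)_m$ (at homological degree $m-2$) and $\Tor_{m-3}(J\cap L,\k)_m$ (at homological degree $m-3$), sitting in consecutive homological slots; hence for $i\neq m-2$ the group $\Tor_i(I,\k)_m$ is squeezed between zeros and vanishes, showing $I$ is again $2$-linear, while at $i=m-2$ one extracts the short exact sequence
$$0\to \Tor_{m-2}(J\oplus L,\k)_m\to \Tor_{m-2}(I,\k)_m\to \Tor_{m-3}(J\cap L,\k)_m\to 0.$$
This yields $\beta_{i,m}(I)=\beta_{i,m}(J)+\beta_{i,m}(L)+\beta_{i-1,m}(J\cap L)$, which in terms of quotients reads
$$\beta_i(S/I)=\beta_i(S/J)+\beta_{i-1}(S/J)+\beta_i(S/L).$$

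It then remains to substitute the inductive formula for $\beta_i(S/J)$ and $\beta_{i-1}(S/J)$ together with $\beta_i(S/L)=\binom{a_k}{i}$, and to collapse the result using Pascal's identity $\binom{n}{i}+\binom{n}{i-1}=\binom{n+1}{i}$ term by term (raising each shift $k-1-j$ to $k-j$) and $\binom{k-1}{i+1}+\binom{k-1}{i}=\binom{k}{i+1}$ for the correction term; this reproduces
$$\sum_{j=1}^k\binom{a_j+k-j}{i}-\binom{k}{i+1}.$$
I expect the main obstacle to be the intersection identity $J\cap L=u_kJ$: it is precisely the vertex cover hypothesis on $U_k$ that forces $J\subseteq\q$ and thereby degenerates the splitting into this clean additive form. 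Once that is in place, the degree bookkeeping in the long exact sequence, though it must be carried out carefully, is automatic and even delivers the linearity of the resolution for free, so that no separate appeal to Fr\"oberg's theorem is needed in the inductive step.
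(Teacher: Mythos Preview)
Your proof is correct and follows essentially the same inductive skeleton as the paper: the same decomposition $I=u_k\q+J$, the same intersection computation $J\cap L=u_kJ$ coming from the vertex-cover hypothesis, and the same recursion $\beta_i(S/I)=\beta_i(S/J)+\beta_{i-1}(S/J)+\binom{a_k}{i}$ collapsed via Pascal's rule. The only real difference is one of packaging: the paper invokes the Betti-splitting lemma (Lemma~\ref{lem_Koszul_variable_split}, imported from \cite{NV}) and relies on the separate proof of Theorem~\ref{thm_cochordal_constructible} for linearity, whereas you unroll the long exact $\Tor$ sequence by hand and obtain $2$-linearity as a byproduct of the induction, so your argument is self-contained and does not need Fr\"oberg's theorem or the external Betti-splitting criterion.
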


We then apply to compute all the Betti numbers of $\Gamma(\ZZ_n)$ when $n$ is of the form $n = p^a$, $n = p^aq$, or $n = pqr$.
\begin{thm}\label{thm_betti_div_1} Let $a \ge 2$ be a positive integer and $p$ be a prime number. Then $I = I(\Gamma(\ZZ_{p^a}))$ has a linear free resolution and  
$$\beta_i(S/I) = \sum_{j= \lceil \frac{a}{2} \rceil}^{a-1} p^{a-j-1}(p-1) \binom{p^j - 2}{i},$$
for all $i \ge 1$. In particular, $\pd(S/I) = p^{a-1} -2$.    
\end{thm}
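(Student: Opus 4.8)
The plan is to feed an explicit cochordal constructible system for $\Gamma(\ZZ_{p^a})$ into Theorem~\ref{thm_betti}. First I would fix coordinates on the graph. Writing each nonzero zero divisor uniquely as $p^{s}u$ with $\gcd(u,p)=1$ and $1\le s\le a-1$, I partition the vertices into levels $L_{1},\dots,L_{a-1}$, where $L_{j}$ consists of the vertices of $p$-adic valuation $j$; counting the admissible units gives $|L_{j}|=p^{a-j-1}(p-1)$. Since $p^{s}u\cdot p^{t}w\equiv 0\pmod{p^{a}}$ exactly when $s+t\ge a$, two vertices are adjacent if and only if their levels sum to at least $a$. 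Consequently $L_{j}$ spans a clique precisely when $2j\ge a$, a level-$j$ vertex is adjacent to every vertex of level $\ge a-j$, and summing $|L_{l}|$ over $l\ge a-j$ shows it has $p^{j}-1$ neighbours — hence degree $p^{j}-1$ for a low level $j<\lceil a/2\rceil$ and degree $p^{j}-2$ for a high level $j\ge\lceil a/2\rceil$, the extra $-1$ coming from deleting the vertex itself from its own clique. Cochordality, and with it a linear resolution by Fr\"oberg's theorem, is already supplied by Theorem~\ref{thm_cochordal_zero_div} in the case $n=p^{a}$, so with this dictionary the target reads $\sum_{j=\lceil a/2\rceil}^{a-1}|L_{j}|\binom{p^{j}-2}{i}$, i.e.\ each high-level vertex contributes $\binom{\text{(its degree)}}{i}$.

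Next I would assemble a constructible system $\P$ (in the sense of Theorem~\ref{thm_cochordal_constructible}) by descending through the levels. The high levels together span one clique $Q$, which I grow from the top down: I first build the clique on $L_{a-1}$, and then for $\tau=a-2,a-3,\dots,\lceil a/2\rceil$ I adjoin the vertices of $L_{\tau}$ one at a time, each new vertex taking as its leaf set the part of $Q$ already present (its upward neighbours; the edges it sends to still-lower levels are deferred and created later, from the lower endpoint). Interleaved with these clique extensions I attach the low levels: the moment $Q$ has descended to level $a-j$, every vertex of $L_{j}$ is adjacent to exactly the vertices of $Q$, so I attach all of $L_{j}$ as a biclique whose leaf set is that clique, of constant size $p^{j}-1$. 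This timing is exactly what makes every prescribed leaf set a vertex cover of the graph built so far: before $Q$ reaches below level $a-j$, each existing edge either lies inside the current clique or joins an already-attached low vertex upward into it, and in both cases an endpoint lies in the clique. Reading off $\type(\P)$, each clique-extension of a level $\tau$ produces a block on which the relevant statistic is constant, each low-level attachment produces a constant block of length $|L_{j}|$, and the moves total $k=p^{a-1}-2$; this last count (a short geometric-series computation) already yields $\pd(S/I)=p^{a-1}-2$.

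I would then evaluate Theorem~\ref{thm_betti} on $\type(\P)$. The decisive observation is that along the clique-extension run incorporating a high level $\tau$ the quantity $a_{j}+(k-j)$ is \emph{constant}, equal to $p^{\tau}-2$; that run therefore contributes $|L_{\tau}|\binom{p^{\tau}-2}{i}$, precisely one summand of the target for each high level. Each low-level biclique block, although its $a_{j}$ is constant, contributes a run of consecutive binomials $\sum_{s}\binom{s}{i}$ because of the shift $(k-j)$; I would collapse each such run by the hockey-stick identity $\sum_{s=m}^{M}\binom{s}{i}=\binom{M+1}{i+1}-\binom{m}{i+1}$. The boundary binomials produced this way telescope against one another, and the outermost one pairs by Pascal's rule $\binom{N}{i+1}-\binom{N-1}{i+1}=\binom{N-1}{i}$ with the correction term $-\binom{k}{i+1}$ of Theorem~\ref{thm_betti} to repair the top level (which, being built from scratch, is one move short); after these cancellations the expression collapses to the displayed sum $\sum_{j=\lceil a/2\rceil}^{a-1}p^{a-j-1}(p-1)\binom{p^{j}-2}{i}$.

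The main obstacle is the construction in the second paragraph: one must verify that the interleaving genuinely keeps every leaf set a vertex cover, i.e.\ that no low level is ever attached after $Q$ has grown past its reach, and that the deferred high–high edges are all eventually created so that $G_{k}=\Gamma(\ZZ_{p^a})$ exactly. Once the ordering is pinned down the type sequence is forced, and the final identification is a careful but elementary exercise in telescoping; tracking the boundary terms of the hockey-stick sums is the only delicate point.
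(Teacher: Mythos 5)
Your constructible system is genuinely different from the paper's, and it is valid. The paper (Lemma \ref{lem_cochord_zero_div_1}) gives a move only to each \emph{high}-level vertex $v_{i,j}$ ($i \ge \lceil a/2 \rceil$), with leaf set $U_{i,j} = V_{i-1} \cup \cdots \cup V_{a-i} \cup \{v_{i,1},\dots,v_{i,j-1}\}$; the low levels are swallowed into the leaf sets rather than attached, so $k = p^{\lfloor a/2\rfloor}-1$, and by Eq.~\eqref{eq_sum} each type entry plus its offset is \emph{immediately} the constant $p^j-2$ — no hockey-stick or telescoping is ever needed. Your scheme instead gives every vertex but one its own move ($k = p^{a-1}-2$), which buys nothing and forces the boundary bookkeeping you correctly identify as the delicate point. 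Your vertex-cover verification of the interleaving is fine, and your constancy claim $a_j + (k-j) = p^\tau - 2$ along the clique runs checks out.

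The genuine gap is that your telescope does not close, and in fact cannot. Carry out your own plan: the low block $j$ occupies offsets making entry-plus-offset run over $p^{a-j-1}-1,\dots,p^{a-j}-2$, so it contributes $\binom{p^{a-j}-1}{i+1}-\binom{p^{a-j-1}-1}{i+1}$; telescoping over $j = 1,\dots,\lceil a/2\rceil -1$ leaves $\binom{p^{a-1}-1}{i+1} - \binom{p^{\lfloor a/2\rfloor}-1}{i+1}$; Pascal against the correction $-\binom{k}{i+1} = -\binom{p^{a-1}-2}{i+1}$ restores the one missing top-block term $\binom{p^{a-1}-2}{i}$, exactly as you say — but the term $-\binom{p^{\lfloor a/2\rfloor}-1}{i+1}$ survives with nothing to cancel it. And it \emph{must} survive: for $p=3$, $a=2$ the graph $\Gamma(\ZZ_9)$ is a single edge, so $\beta_1 = 1$, while the displayed sum gives $2$; for $p=2$, $a=4$ the graph is the star at $8$ plus the edge $\{4,12\}$, so $\beta_1 = 7$ and $\beta_2 = 16$, while the displayed sum gives $10$ and $17$. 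Your computation, done honestly, yields
$$\beta_i(S/I) \;=\; \sum_{j=\lceil a/2\rceil}^{a-1} p^{a-j-1}(p-1)\binom{p^j-2}{i} \;-\; \binom{p^{\lfloor a/2\rfloor}-1}{i+1},$$
which agrees with applying Theorem \ref{thm_betti} to the paper's type sequence, whose $k$ equals $p^{\lfloor a/2\rfloor}-1$. The paper's own proof commits the very slip you are trying to engineer: it invokes Theorem \ref{thm_betti} but silently drops the $-\binom{k}{i+1}$ term, so the statement as printed overcounts whenever $i+1 \le p^{\lfloor a/2\rfloor}-1$. You should report the corrected formula rather than force cancellations toward an unreachable target.

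A smaller point: ``the moves total $k = p^{a-1}-2$'' does not by itself yield $\pd(S/I)$ — the projective dimension is $\max_j\{a_j + (k-j)\}$ (Corollary \ref{cor_pd}), which here equals $p^{a-1}-2$ via the top block; with the corrected formula one should also note the surviving negative term cannot kill the top Betti number, since $p^{\lfloor a/2\rfloor}-1 < p^{a-1}-1$ for $a \ge 3$, and the case $a=2$ is checked directly ($\beta_{p-2} = p-2$ for $p \ge 3$; for $p=2$, $I=0$). So the value $\pd(S/I)=p^{a-1}-2$ is correct, but not for the reason you give.
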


\begin{thm}\label{thm_betti_div_2} Let $a$ be a positive integer and $p,q$ be two distinct prime numbers. Then $I = I(\Gamma (\ZZ_{p^a q}))$ has a linear free resolution and 
$$ \beta_i(S/I) = \sum_{j= \lceil \frac{a}{2} \rceil}^{a-1} p^{a-j-1}(p-1) \binom{qp^j - 2}{i} + \sum_{j=0}^{\lceil \frac{a}{2} \rceil - 1} p^{a-j-1}(p-1) \binom{(q-1)p^j + p^{a-j} -2}{i},$$
for all $i \ge 1$. In particular, 
$$\pd(S/I) = \begin{cases} 
qp^{a-1} - 2 & \text{ if } a \ge 2, \\
q +p  - 3 & \text{ if } a = 1.\end{cases}$$        
\end{thm}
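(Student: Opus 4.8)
The plan is to realize $\Gamma := \Gamma(\ZZ_{p^aq})$ as the terminal graph $G_k$ of an explicit cochordal constructible system, compute its type, and then invoke \ref{thm_betti}; cochordality itself is guaranteed by \ref{thm_cochordal_zero_div} together with \ref{thm_cochordal_constructible}. First I would record the combinatorial structure of $\Gamma$. Grouping the nonzero zero divisors by $d = \gcd(u,n)$, the classes are $A_i = \{d = p^i\}$ for $i = 1,\dots,a$ and $B_j = \{d = p^jq\}$ for $j = 0,\dots,a-1$, of sizes $|A_i| = p^{a-i-1}(p-1)(q-1)$ for $i<a$, $|A_a| = q-1$, and $|B_j| = p^{a-j-1}(p-1)$. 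Since $uv\equiv 0\pmod n$ iff $v_p(u)+v_p(v)\ge a$ and $v_q(u)+v_q(v)\ge 1$, the set $A = \bigcup_i A_i$ is independent, $A_i$ is completely joined to $B_j$ iff $i+j\ge a$, $B_j$ is completely joined to $B_{j'}$ iff $j+j'\ge a$, and $B_j$ is an internal clique iff $j\ge\lceil a/2\rceil$. The $B$-part thus carries exactly the threshold pattern already exploited for $\Gamma(\ZZ_{p^a})$ in \ref{thm_betti_div_1}, now sitting over the independent base $A$.

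Next I would take all $B$-vertices as the star centers, inserting the classes in order of increasing $j$ and, inside a class, its $|B_j|$ twins one at a time. When a vertex $u\in B_j$ is inserted I let its leaf set $U$ be all of its neighbors already present: the nested set $\bigcup_{i\ge a-j}A_i$ of size $(q-1)p^j$, the lower classes $B_{j'}$ with $a-j\le j'<j$, and—only if $j\ge\lceil a/2\rceil$—the twins of $B_j$ inserted before it. Because the $A$-parts are nested and each $B$-$B$ edge is handled at its later endpoint, one checks that every such $U$ contains a vertex cover of the graph built so far, so this is a genuine cochordal constructible system and $\Gamma = G_k$ with $k = \sum_j |B_j| = p^a-1$. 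The point of reading the type off \emph{class by class} is that the two sums in the statement correspond precisely to the dichotomy $j\ge\lceil a/2\rceil$ versus $j<\lceil a/2\rceil$.

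Finally I would feed the type into \ref{thm_betti}, where the star inserted at step $m$ contributes $\binom{|U|+(k-m)}{i}$. For a high class $j\ge\lceil a/2\rceil$ the growth of the leaf set by $1$ at each successive twin exactly cancels the decrease of the shift $k-m$, so the whole $B_j$-block collapses to $|B_j|$ identical terms; a short count of the $p^{a-j}-1$ stars indexed by $B_j,\dots,B_{a-1}$ then pins the common top at $qp^j-2$, producing the first sum $\sum_{j\ge\lceil a/2\rceil}p^{a-j-1}(p-1)\binom{qp^j-2}{i}$. For a low class $j<\lceil a/2\rceil$ there are no internal or lower-$B$ leaves, so the $B_j$-block is instead a run of consecutive tops; summing these runs over all low classes and absorbing the global correction $-\binom{k}{i+1}$ via hockey-stick and Pascal identities is what should reorganize the remaining terms into the second sum, with tops $(q-1)p^j+p^{a-j}-2$. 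Linearity of the resolution is free from Fr\"oberg, since $I$ is generated in degree $2$ and $\Gamma$ is cochordal, and $\pd(S/I)$ is the largest top carrying nonzero multiplicity, obtained by comparing $qp^{a-1}-2$ against the maxima of the second family, with the case $a=1$ collapsing to $K_{p-1,q-1}$ and the symmetric value $p+q-3$. The hard part is this last paragraph: verifying the covering condition through the $B$-$B$ clique and cross edges, and then carrying out the telescoping for the low classes carefully enough that the many shifted binomials and the $-\binom{k}{i+1}$ term really do collapse to the compact form.
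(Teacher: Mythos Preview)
Your overall plan---build the cochordal constructible system with the $B$-vertices as star centres, read off the type, and feed it into Theorem~\ref{thm_betti}---is exactly the paper's strategy: the paper simply cites its Lemma~3.2 for the type of $\Gamma(\ZZ_{p^aq})$ and then applies Theorem~\ref{thm_betti} and Eq.~(3.1). So the architecture is the same.

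There is, however, a genuine obstruction in your final paragraph, and it is not a matter of bookkeeping. Your construction is in fact \emph{more} careful than the paper's: you correctly observe that for a low class $j<\lceil a/2\rceil$ the set $B_j$ is \emph{not} a clique, so the leaf set of the $t$-th twin in $B_j$ consists only of $\bigcup_{i\ge a-j}A_i$, with no previously inserted twins. The paper's Lemma~3.2, by contrast, defines $W_{i,j}$ uniformly and includes $\{v_{i,1},\dots,v_{i,j-1}\}$ even for low $i$, which would insert edges not present in $\Gamma(\ZZ_{p^aq})$. With your correct leaf sets, the low-class block really is a run of consecutive tops, and the paper's proof omits the $-\binom{k}{i+1}$ correction from Theorem~\ref{thm_betti} altogether.

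The trouble is that the hockey-stick collapse you hope for cannot land on the stated second sum, because the displayed formula in the theorem is itself wrong. Take $a=2$, $p=2$, $q=3$, so $n=12$: the zero-divisor graph has exactly $8$ edges (namely $V_0$--$U_2$, $V_1$--$U_1$, $V_1$--$U_2$), whereas the formula gives $\beta_1 = \binom{4}{1}+2\binom{4}{1}=12$. Your own type computation gives type $(4,2,2)$ and hence $\beta_1 = \binom{4}{1}+\binom{3}{1}+\binom{4}{1}-\binom{3}{2}=8$, which is correct. So the step ``absorb $-\binom{k}{i+1}$ via hockey-stick to obtain the second sum'' will fail not because your combinatorics is off, but because the target identity is false. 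Any completed proof along your (correct) lines will produce a formula differing from the one stated.
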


\begin{thm}\label{thm_betti_div_3} Let $p<q<r$ be prime numbers. Then $I = I(\Gamma(\ZZ_{pqr}))$ has a linear free resolution and 
$$\beta_i(S/I) = (p-1) \binom{qr +p -3}{i} + (q-1) \binom{pr + q -3}{i} + (r-1) \binom{pq+r-3}{i},$$
for all $i \ge 1$. In particular, $\pd(S/I) = qr + p - 3$     
\end{thm}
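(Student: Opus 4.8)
The plan is to reduce the statement to a purely combinatorial computation and then run it through Theorem~\ref{thm_betti}. By Theorem~\ref{thm_cochordal_zero_div} the graph $\Gamma(\ZZ_{pqr})$ is cochordal, so by Fröberg's theorem \cite{F} the ideal $I$ has a linear resolution; this already disposes of the linearity claim and means that the graded Betti numbers are governed by the total Betti numbers $\beta_i(S/I)$, which Theorem~\ref{thm_betti} computes from the type of any cochordal constructible system of the graph. Thus there are three tasks: (i) describe $\Gamma(\ZZ_{pqr})$ combinatorially, (ii) exhibit a cochordal constructible system and read off its type, and (iii) simplify the binomial sum of Theorem~\ref{thm_betti} to the advertised closed form and extract $\pd$.

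For (i) I would sort the nonzero zero divisors $x$ by the set $\supp(x)\subseteq\{p,q,r\}$ of primes dividing $x$. Since $pqr$ is squarefree, $xy\equiv 0$ holds exactly when $\supp(x)\cup\supp(y)=\{p,q,r\}$, so the entire adjacency structure depends only on these supports. This partitions the vertices into six classes: three ``singleton'' classes with supports $\{p\},\{q\},\{r\}$ of sizes $(q-1)(r-1),(p-1)(r-1),(p-1)(q-1)$, and three ``doubleton'' classes with supports $\{q,r\},\{p,r\},\{p,q\}$ of sizes $p-1,q-1,r-1$ (the counts come from Euler's function). The edges are then forced: each singleton class $T$ is joined completely to the unique doubleton $\{p,q,r\}\setminus T$, the three doubleton classes form a complete tripartite graph, and there are no other edges. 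So $G$ is a complete tripartite ``core'' on the doubletons with three complete bipartite ``pendants'' hung on it.

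For (ii) I would pass to the complement: $\overline G$ consists of one clique $W$ on all singleton-class vertices together with the three doubleton classes realized as mutually non-adjacent cliques, the doubleton with support $T$ being joined in $\overline G$ to the two singleton classes $\{\ell\}$ with $\ell\in T$. One checks directly that every doubleton vertex is simplicial in $\overline G$, so eliminating the doubletons first and then $W$ is a perfect elimination order of $\overline G$; reversing it yields a cochordal constructible system $\P$ via Theorem~\ref{thm_cochordal_constructible}, in which $|U_j|$ counts the $G$-neighbours of $u_j$ already present. Carrying this out prime by prime, I expect $\type(\P)$ to split into three blocks, one per prime, whose shifted entries $a_j+(k-j)$ sweep out arithmetic runs with top values $qr+p-3$, $pr+q-3$, $pq+r-3$ and lengths $p-1$, $q-1$, $r-1$.

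For (iii), feeding this type into Theorem~\ref{thm_betti} gives $\beta_i(S/I)=\sum_j\binom{a_j+(k-j)}{i}-\binom{k}{i+1}$, and the hockey-stick identity $\sum_{n=s}^{t}\binom{n}{i}=\binom{t+1}{i+1}-\binom{s}{i+1}$ collapses each of the three runs; after the internal telescoping cancels against the correction term $\binom{k}{i+1}$, only the three leading binomials $\binom{qr+p-3}{i},\binom{pr+q-3}{i},\binom{pq+r-3}{i}$ should survive, with multiplicities $p-1,q-1,r-1$. Since $p<q<r$ forces $qr+p-3$ to be the largest top value (indeed $(qr+p)-(pr+q)=(q-p)(r-1)>0$, and similarly $(qr+p)-(pq+r)=(r-p)(q-1)>0$), the largest index with $\beta_i\neq 0$ is $qr+p-3$, giving $\pd(S/I)=qr+p-3$. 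The hard part is step (ii) together with the bookkeeping in (iii): one must verify that assembling the tripartite core while the pendants are already present keeps each $U_j$ a genuine vertex cover, and then keep the three telescoping blocks exactly aligned with the single correction term $\binom{k}{i+1}$ so that everything outside the three stated terms cancels.
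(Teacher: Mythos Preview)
Your overall plan coincides with the paper's: build a cochordal constructible system for $\Gamma(\ZZ_{pqr})$, read off its type, and feed it to Theorem~\ref{thm_betti}. The paper simply cites Lemma~\ref{lem_cochord_zero_div_3} for the type; you instead sketch the system via simplicial vertices of $\overline G$ and then anticipate a hockey-stick telescoping.

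The gap is in step (iii), and it cannot be repaired. In a \emph{valid} constructible system one must have $G_k=G$, so each $U_j$ is exactly $N_G(u_j)$ restricted to the vertices already present; since $n=pqr$ is squarefree, every doubleton class $V_i$ is \emph{independent} in $G$, and peeling the doubletons in the order $V_3,V_2,V_1$ gives constant block sizes $a_j$ equal to $qr-1$, $pr-p$, $(p-1)(q-1)$. The shifted entries $a_j+(k-j)$ therefore form the three consecutive runs
\[
[qr-1,\,qr+p-3],\qquad [pr-1,\,pr+q-3],\qquad [pq-1,\,pq+r-3].
\]
Applying hockey-stick leaves boundary terms $\binom{qr-1}{i+1},\binom{pr-1}{i+1},\binom{pq-1}{i+1}$ that neither match one another nor equal the correction $\binom{k}{i+1}=\binom{p+q+r-3}{i+1}$, so no cancellation occurs and you cannot end up with three binomials carrying multiplicities $p-1,q-1,r-1$. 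Indeed the stated formula is already wrong for $\beta_1$: with $(p,q,r)=(2,3,5)$ it gives $1\cdot 14+2\cdot 10+4\cdot 8=66$, whereas $\Gamma(\ZZ_{30})$ has exactly $38$ edges.

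The paper's own argument has the mirror-image flaw. Lemma~\ref{lem_cochord_zero_div_3} inserts $\{v_{i,1},\dots,v_{i,j-1}\}$ into $W_{i,j}$, which forces edges inside the independent set $V_i$; the resulting $\P$ is therefore not a cochordal constructible system of $\Gamma(\ZZ_{pqr})$, its type is too large by exactly $\binom{p-1}{2}+\binom{q-1}{2}+\binom{r-1}{2}$ in total degree, and the Betti formula of Theorem~\ref{thm_betti_div_3} (as stated) fails. Your steps (i)--(ii) are in fact more careful than the paper's here; the issue is solely that the clean closed form you are aiming for is not the correct answer.
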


In Section \ref{sec_cochodal_constructible}, we discuss cochordal constructible systems and prove Theorem \ref{thm_cochordal_constructible}. We then prove Theorem \ref{thm_betti} and deduce its consequences. In Section \ref{sec_zero_div}, we establish Theorem \ref{thm_cochordal_zero_div} and compute all the Betti numbers of edge ideals of zero divisor graphs of $\ZZ/n\ZZ$ when they are cochordal.

\section{Cochordal constructible systems}\label{sec_cochodal_constructible}
In this section, we introduce the notion of cochordal constructible system and prove that any cochordal graph can be constructed from a cochordal constructible system. From that, we deduce a formula for all the Betti numbers of edge ideals of cochordal graphs.

We fix the following notation throughout the paper. Assume that $S = \k[x_1,\ldots,x_n]$ is a standard graded polynomial ring over a field $\k$ with the graded maximal ideal $\m = (x_1,\ldots,x_n)$. An ideal $P$ generated by variables of $S$ is called a monomial prime ideal. If $P = (x_{i_1}, \ldots, x_{i_s})$, we denote by $\supp P = \{i_1, \ldots, i_s\}$.

Let $I = (f_1, \ldots, f_t)$ be a monomial ideal of $S$ and $y$ be a variable of $S$. Let $J = (f_j \mid y \text{ does not divide } f_j)$. Then $I$ has a unique decomposition, called the $y$-partition of $I$, $I = J + yL$, where $yL = (f_j \mid y \text{ divides } f_j)$.

\subsection{Betti numbers} Let $M$ be a finitely generated graded $S$-module. For integers $i,j$ with $i \ge 0$, the $i$-th Betti number of $M$ in degree $j$ is 
$$\beta_{i,j}(M) = \dim_\k \Tor_i^S(k,M)_j.$$
The projective dimension of $M$, denoted by $\pd_S(M)$ and regularity of $M$, denoted by $\reg_S(M)$ are defined by 
\begin{align*}
    \pd_S(M) &= \sup \{ i\mid \beta_{i,j}(M) \neq 0 \text{ for some } j\},\\
    \reg_S(M) &= \sup \{ j -i \mid \beta_{i,j}(M) \neq 0\}. 
\end{align*}
The following result is well-known.
\begin{lem}\label{lem_mul_x} Let $x$ be a variable and $I$ a nonzero homogeneous ideal of $S$. Then 
$$\beta_{i,j}(xI) = \beta_{i,j-1}(I) \text{ for all } i \ge 0.$$
\end{lem}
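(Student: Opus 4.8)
The plan is to exhibit multiplication by $x$ as a graded isomorphism, up to a degree shift, between $I$ and $xI$, and then to invoke the fact that graded Betti numbers depend on a module only through its graded isomorphism class and transform predictably under internal degree shifts.

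First I would consider the $S$-linear map $\mu_x \colon I \to xI$ given by $\mu_x(f) = xf$. It is surjective by the very definition of $xI$, and it is injective because $x$ is a nonzerodivisor in the domain $S$; hence it is an isomorphism of $S$-modules. Since $\mu_x$ carries a homogeneous element of degree $d$ to one of degree $d+1$, it is homogeneous of degree $1$, and therefore induces an isomorphism of graded modules $I(-1) \xrightarrow{\ \sim\ } xI$, where $I(-1)$ denotes the shift normalized by $I(-1)_j = I_{j-1}$.

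Next I would use the standard behavior of $\Tor$ under isomorphism and degree shift. As $\Tor_i^S(\k,-)$ is a functor, graded-isomorphic modules share the same graded Betti numbers; moreover the shift functor is exact and commutes with the tensor product, so $\Tor_i^S(\k, M(a))_j = \Tor_i^S(\k, M)_{j+a}$, giving $\beta_{i,j}(M(a)) = \beta_{i,j+a}(M)$. Applying this with $M = I$ and $a = -1$ yields $\beta_{i,j}(xI) = \beta_{i,j}(I(-1)) = \beta_{i,j-1}(I)$ for all $i$ and $j$, which is exactly the claim.

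There is essentially no genuine obstacle here; the only point requiring care is getting the direction of the degree shift right. As a cross-check (and an alternative argument that sidesteps shift conventions), one can start from a minimal graded free resolution $F_\bullet$ of $I$ and apply $\mu_x$ termwise: multiplying each generating degree by $x$ produces a complex $F_\bullet(-1)$ resolving $xI$, and it stays minimal because the entries of the differentials still lie in $\m$. Reading off the twists then shows directly that every graded Betti number of $xI$ equals the corresponding one of $I$ with internal degree raised by one.
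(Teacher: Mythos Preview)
Your argument is correct: multiplication by the nonzerodivisor $x$ gives a graded isomorphism $I(-1)\cong xI$, and graded Betti numbers transform under shifts exactly as you state; the alternative via shifting a minimal free resolution is also fine. The paper does not actually prove this lemma---it is stated as well-known with no proof---so there is nothing to compare your approach against.
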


\subsection{Betti splittings}
Betti splittings were introduced by Francisco, Ha, and Van Tuyl \cite{FHV} for monomial ideals. We follows the treatment of Betti splittings in \cite{NV}. Let $P,I,J$ be proper homogeneous ideals of $S$ such that $P = I + J$.
\begin{defn} The decomposition $P = I + J$ is called a Betti splitting if for all $i \ge 0$, the following equality of Betti numbers holds: $\beta_i(P) = \beta_i(I) + \beta_i(J) + \beta_{i-1}(I \cap J)$.    
\end{defn}

\begin{lem}\label{lem_Koszul_variable_split} Let $I$ be a quadratic monomial ideal and $x$ be a variable. Then the $x$-partition of $I$, $I = xP + J$ is a Betti splitting.
\end{lem}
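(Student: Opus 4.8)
The plan is to verify the standard homological characterization of a Betti splitting. Write $I_1 = xP$ and $I_2 = J$ for the two halves of the $x$-partition. From the Mayer--Vietoris short exact sequence $0 \to I_1 \cap I_2 \to I_1 \oplus I_2 \to I \to 0$ and its associated long exact sequence in $\Tor$, the equality $\beta_i(I) = \beta_i(I_1) + \beta_i(I_2) + \beta_{i-1}(I_1 \cap I_2)$ holds for all $i$ if and only if the maps
$$\phi_i = (\phi_i', \phi_i'') \colon \Tor_i^S(\k, I_1 \cap I_2) \longrightarrow \Tor_i^S(\k, I_1) \oplus \Tor_i^S(\k, I_2)$$
induced by the two inclusions vanish for every $i$ (see \cite{FHV, NV}). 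So the whole problem reduces to proving $\phi_i' = 0$ and $\phi_i'' = 0$.

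First I would identify the intersection. Since $J$ is generated by the quadratic generators of $I$ not divisible by $x$ and $xP$ by those divisible by $x$, the ideal $P$ is generated by variables, and a direct monomial computation gives $I_1 \cap I_2 = xP \cap J = x(P \cap J)$, where $P \cap J$ is generated by the least common multiples $\operatorname{lcm}(y,g)$ of a variable $y$ from $P$ and a quadratic generator $g$ of $J$. Each such generator has degree at least $2$, so every minimal generator of $x(P \cap J)$ has degree at least $3$; consequently $\beta_{i,j}(x(P \cap J)) = 0$ whenever $j < i+3$.

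For the component $\phi_i'$ landing in $\Tor_i^S(\k, xP)$ I would use a total-degree argument. By Lemma \ref{lem_mul_x}, $\beta_{i,j}(xP) = \beta_{i,j-1}(P)$, and since $P$ is generated by variables it is resolved by a Koszul complex and has a linear resolution; hence $xP$ has a linear resolution generated in degree $2$, i.e.\ $\beta_{i,j}(xP) = 0$ unless $j = i+2$. As $\phi_i'$ is a graded map of degree $0$, in each internal degree $j$ its source vanishes for $j < i+3$ while its target vanishes for $j \neq i+2$; these ranges are disjoint, so $\phi_i' = 0$. For the component $\phi_i''$ landing in $\Tor_i^S(\k, J)$ the total-degree argument fails, since $J$ need not have a linear resolution, so instead I would use the finer $\ZZ^n$-grading: every Betti multidegree of $x(P \cap J)$ has positive $x$-exponent, whereas no generator of $J$ is divisible by $x$, so every Betti multidegree of $J$ has $x$-exponent $0$. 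A multigraded homomorphism preserves multidegree, and a map from a module whose $\Tor$ is supported in $x$-exponent $\ge 1$ to one supported in $x$-exponent $0$ must vanish; hence $\phi_i'' = 0$.

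Combining the two vanishings gives $\phi_i = 0$ for all $i$, which is exactly the Betti splitting condition. The main point to get right is precisely that a single grading does not suffice: the total-degree argument disposes of the $xP$-component, crucially using that $xP$ (unlike $J$) has a linear resolution, while only the multigraded $x$-exponent argument disposes of the $J$-component. Finally I would dispatch the degenerate cases where $J = 0$ or $xP = 0$ as immediate, and note that the argument is unaffected if $x^2 \in I$: then $x \in P$, but $P$ is still generated by variables and all the degree bounds above remain valid.
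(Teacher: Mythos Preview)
Your argument is correct. The paper's own proof is a single sentence: it observes that $P$ is generated by variables (since $I$ is quadratic) and then invokes \cite[Corollary 4.12]{NV} directly. You instead unpack the mechanism: you verify by hand that both Tor maps $\phi_i'$ and $\phi_i''$ vanish, using the linear resolution of $xP$ for the first component and the $\ZZ^n$-multigrading for the second. This is genuinely more informative than the citation---a reader sees exactly why the splitting works, and in particular why two different gradings are needed---at the cost of being longer. One minor simplification: to see that $x(P\cap J)$ is generated in degrees $\ge 3$ you do not need the lcm description of $P\cap J$; it is enough that $P\cap J\subseteq J\subseteq\m^2$. Otherwise the identification $xP\cap J = x(P\cap J)$, the degree count for $xP$, and the multidegree support argument for $J$ are all sound, including in the case $x^2\in I$.
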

\begin{proof}
    Since $I$ is generated by quadratic monomials, $P$ is generated by variables. The conclusion follows from \cite[Corollary 4.12]{NV}.
\end{proof}

\subsection{Cochordal constructible systems} Let $G$ be a simple graph on vertex set $V(G) = [n]$ and edge set $E(G) \subseteq V(G) \times V(G)$. The edge ideal of $G$ is defined by 
$$I(G) = (x_ix_j \mid \{i,j\} \text{ is an edge of } G) \subset S.$$

The following is an essential property of cochordal graphs. 
\begin{lem}\label{lem_cochordal_cover_ideal} Let $I = I(G)$ be the  edge ideal of a cochordal graph. Then there exists a variable $x_i$ such that $I:x_i = P$ is a monomial prime ideal. In other words, $I = x_i P + J$ and $J \subset P$.  
\end{lem}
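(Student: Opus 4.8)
The plan is to translate the algebraic condition ``$I:x_i$ is a monomial prime ideal'' into a combinatorial condition on the vertex $i$, and then to invoke the existence of a simplicial vertex in the chordal graph $\overline{G}$.

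First I would compute $I:x_i$ explicitly for an arbitrary vertex $i$. Writing $N(i)$ for the set of neighbors of $i$ in $G$, a direct inspection of the monomial generators shows that
$$I : x_i = (x_b \mid b \in N(i)) + (x_a x_b \mid \{a,b\} \in E(G),\ i \notin \{a,b\}).$$
Indeed, a monomial $m$ lies in $I : x_i$ exactly when $x_i m$ is divisible by some edge monomial $x_a x_b$: if $i \in \{a,b\}$ this forces $m$ to be divisible by a variable $x_b$ with $b \in N(i)$, whereas if $i \notin \{a,b\}$ it forces $m$ to be divisible by the edge monomial $x_a x_b$ itself (since $x_i$ cannot contribute to it). Setting $P = (x_b \mid b \in N(i))$ and $J = (x_a x_b \mid \{a,b\} \in E(G),\ i \notin \{a,b\})$, this is precisely the $x_i$-partition $I = x_i P + J$, and $I : x_i = P + J$.

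Next I would observe that $I:x_i$ coincides with the monomial prime ideal $P$ if and only if $J \subseteq P$. A square-free generator $x_a x_b$ of $J$ lies in $P$ exactly when $a \in N(i)$ or $b \in N(i)$, so $J \subseteq P$ holds if and only if every edge of $G$ avoiding $i$ meets $N(i)$; equivalently, $N(i)$ is a vertex cover of $G$. One also checks that no variable other than those in $P$ can be a minimal generator of $I:x_i$ (a new variable $x_c$ would require the square-free degree-$2$ monomial $x_i x_c$ to lie in $I$, forcing $c \in N(i)$, and $x_i \notin I:x_i$ since $I$ is square-free). Hence $I:x_i$ is a monomial prime ideal if and only if $N(i)$ is a vertex cover of $G$. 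Finally I would reinterpret this in the complement: $N(i)$ is a vertex cover of $G$ precisely when the non-neighbors of $i$ in $G$ form an independent set of $G$, and an independent set among the non-neighbors of $i$ in $G$ is exactly a clique on $N_{\overline{G}}(i)$ in $\overline{G}$. Thus $N(i)$ is a vertex cover of $G$ if and only if $i$ is a simplicial vertex of $\overline{G}$.

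The remaining step, which is where the hypothesis enters, is to produce such a vertex. Since $\overline{G}$ is chordal, Dirac's theorem \cite{Di} guarantees that $\overline{G}$ has a simplicial vertex $i$. For this $i$ we obtain $I : x_i = P$ a monomial prime ideal together with $J \subseteq P$, which is exactly the assertion. The only genuine input is the classical existence of a simplicial vertex in a chordal graph; everything else is a routine unwinding of the dictionary between $I:x_i$ and the neighborhood of $i$, so I expect no serious obstacle once the combinatorial reformulation is in place.
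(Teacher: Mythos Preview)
Your argument is correct. You compute $I:x_i$ explicitly, identify the condition ``$I:x_i$ is a monomial prime'' with ``$i$ is simplicial in $\overline{G}$'', and then appeal to Dirac's theorem to produce such a vertex. Each step is sound; in particular your check that $I:x_i$ cannot pick up any new linear generator beyond $P$ (because $x_ix_c\in I$ would force $c\in N(i)$, and $x_i\notin I:x_i$ by squarefreeness) closes the only potential gap.

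The paper, by contrast, gives no argument at all here: it simply cites \cite[Proposition 3.19]{JV}. Your route is therefore more explicit and entirely self-contained, relying only on the classical simplicial-vertex characterization of chordal graphs \cite{Di} already invoked elsewhere in the paper. What the citation buys is brevity and a pointer to a source where the statement appears in a broader context (the $v$-number of edge ideals); what your approach buys is that the reader sees exactly why cochordality is the right hypothesis, via the dictionary $N_G(i)$ vertex cover $\Longleftrightarrow$ $i$ simplicial in $\overline{G}$, which is also the mechanism behind the cochordal constructible systems introduced immediately afterward.
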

\begin{proof}
    The conclusion follows from \cite[Proposition 3.19]{JV}.
\end{proof}

\begin{proof}[Proof of Theorem \ref{thm_cochordal_constructible}] First, assume that $G$ is cochordal. We prove by induction on $n$ that $G$ has a cochordal constructible system. The base case $n = 2$ is clear. Now, assume that $n \ge 3$. By Lemma \ref{lem_cochordal_cover_ideal}, there exists a variable $x_i$ such that $I(G):x_i = P$ is a monomial prime ideal. Hence, $U = \supp P$ is a vertex cover of $G'$, the induced subgraph of $G$ on $V(G) \setminus \{i\}$. By induction, $G'$ has a cochordal constructible system $\P' = (u_{k-1},U_{k-1}, \ldots, u_1, U_1)$. Set $u_k = i$ and $P_k = U$, we deduce that $\P = (u_k,U_k, \ldots, u_1, U_1)$ is a cochordal constructible system of $G$.

Now, assume that $G$ has a cochordal constructible system $\P$. We prove by induction on $k$ that $G$ is cochordal. By the result of Fr\"oberg \cite{F}, it suffices to prove that $\reg (I(G)) = 2$. By definition, $G = G_k$. The base case $k = 1$ is clear. By induction $G_{k-1}$ is cochordal and $I(G_k) = x_k P_k + I(G_{k-1})$ where $x_k$ corresponds to the vertex $u_k$ and $P_k$ is the monomial prime ideals generated by variables corresponding to $U_k$. By Lemma \ref{lem_Koszul_variable_split}, the decomposition $I(G_k) = x_k P_k + I(G_{k-1})$ is a Betti splitting. Furthermore, by assumption, $x_kP_k \cap I(G_{k-1}) = x_k I(G_{k-1})$. Hence, 
$$\reg (I(G_k))  = \max \{ \reg (I(G_{k-1})), \reg (x P_k), \reg (x_k I(G_{k-1})) - 1\} = 2.$$
The conclusion follows.
\end{proof}

\subsection{Betti numbers of cochordal graphs} Let $G$ be a cochordal graph of type $(a_k, \ldots, a_1)$. By Theorem \ref{thm_cochordal_constructible}, the edge ideal of $G$ can be written as 
$$I(G) = x_kP_k + x_{k-1}P_{k-1} + \cdots +x_1P_1$$
where $P_j$ is a monomial prime ideal generated by $a_j$ variables. Let $I_j = x_{j} P_j + \cdots + x_1 P_1$ for all $j = 1,\ldots, k$.

\begin{proof}[Proof of Theorem \ref{thm_betti}]
    We prove by induction on $k$. The case $k = 1$ is clear as $I_1 = x_1P_1$ with $P_1$ is a monomial prime ideal generated by $a_1$ variables. 

Now, assume that the statement holds for $k-1$. Note that, $\beta_1(S/I) = a_k + \cdots +a_1 = a_k + (a_{k-1} + 1) + \cdots + (a_1 + k-1) - \binom{k}{2}.$ Thus, we may assume that $i \ge 2$. By the proof of Theorem \ref{thm_cochordal_constructible}, the decomposition $I_k = x_kP_k + I_{k-1}$ is a Betti splitting and $x_k P_k \cap I_{k-1} = x_kI_{k-1}$. Hence, we have for all $i \ge 2$,
    $$\beta_i(S/I_k) = \beta_i(S/I_{k-1}) + \beta_{i-1}(S/I_{k-1}) + \beta_i(S/(x_kP_k)).$$
    By induction, we have 
\begin{align*}
    \beta_i(S/I_{k-1}) &= \binom{a_{k-1}}{i} +  \cdots + \binom{a_1 + k-2}{i}  - \binom{k-1}{i+1} \\
    \beta_{i-1} (S/I_{k-1}) &= \binom{a_{k-1}}{i-1} + \cdots +  \binom{a_1 + k-2}{i-1} -\binom{k-1}{i}.
\end{align*}    
The conclusion follows from Lemma \ref{lem_mul_x}, the fact that $\beta_i(S/P_k) = \binom{a_k}{i}$, and the binomial identity $\binom{n}{i} + \binom{n}{i-1} = \binom{n+1}{i}$.
\end{proof}

Dochtermann \cite{Doc} gave a formula for the Betti numbers of the ideal of the complement of a $d$-chordal cluster. His formula requires the computation of the maximal clique sizes containing the exposed edge at each step. Generally, our method is computationally simpler, as it depends on the number of variables, whereas Dochtermann's formula depends on the number of generators of $I$.    

\begin{exm} Consider the following edge ideal 
$$I = x_3 (x_1,x_2) + x_2(x_1,x_4) + x_1(x_5,x_6)\subseteq R = k[x_1,\ldots,x_6].$$
According to Dochtermann's formula, we see that the sequence of exposed edges are $x_1x_6,x_1x_5,x_1x_2,x_1x_3,x_2x_3,x_2x_4$. Hence, the $k_j$-invariants are $0,1,2,3,1,2$. In our formula, $I$ has type $(2,2,2)$. Also, $I$ can be rewritten as
$$I = x_1(x_2,x_3,x_5,x_6) + x_2(x_3,x_4).$$
Thus, $I$ also has type $(4,2)$. By Theorem \ref{thm_betti}, the Betti table of $I$ is 

    \[\begin{array}{l|c c c c }
            &   0   & 1     & 2     & 3 \\ \hline
        -   &   -   &   -   & -     & - \\
      2   &6    &  9 & 5 & 1
    \end{array}
\]
\end{exm}

\begin{cor}\label{cor_pd} Let $G$ be a cochordal graph of type $(a_k, \ldots, a_1)$. Then 
    $\pd (S/I(G)) = \max \{a_k, a_{k-1}+1, \ldots, a_1 + k -1\}$.
\end{cor}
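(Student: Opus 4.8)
The plan is to read off the projective dimension directly from the closed formula in Theorem~\ref{thm_betti}, since by definition $\pd(S/I(G))$ is the largest index $i$ with $\beta_i(S/I(G)) \neq 0$. To streamline the bookkeeping I would write $c_j = a_{k-j+1} + (j-1)$ for $j = 1, \ldots, k$, so that the formula becomes $\beta_i(S/I(G)) = \sum_{j=1}^k \binom{c_j}{i} - \binom{k}{i+1}$ for all $i \ge 1$, and set $M = \max\{c_1, \ldots, c_k\} = \max\{a_k, a_{k-1}+1, \ldots, a_1+k-1\}$. The goal is then to show that $\beta_M(S/I(G)) \neq 0$ while $\beta_i(S/I(G)) = 0$ for every $i > M$.

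The first step is the elementary but decisive observation that $M \ge k$. Indeed, every $U_j$ is nonempty: for $j = 1$ because $G_1 = K_{u_1,U_1}$ is a genuine edge-bearing graph, and for $j \ge 2$ because $U_j$ is a vertex cover of $G_{j-1}$, which already contains the edges of $G_1$. Hence $a_j \ge 1$ for all $j$, and in particular the last term satisfies $c_k = a_1 + (k-1) \ge k$, so $M \ge k$.

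Granting $M \ge k$, the two ranges are handled directly from the formula. For $i > M$ every binomial $\binom{c_j}{i}$ vanishes since $c_j \le M < i$, and the correction term $\binom{k}{i+1}$ vanishes since $i+1 > M+1 > k$; therefore $\beta_i(S/I(G)) = 0$. For $i = M$ the correction term $\binom{k}{M+1}$ again vanishes because $M+1 > k$, leaving $\beta_M(S/I(G)) = \sum_{j=1}^k \binom{c_j}{M}$. This sum is strictly positive, because at least one index realizes the maximum, contributing $\binom{M}{M} = 1$, while all remaining terms are nonnegative. Combining the two cases yields $\pd(S/I(G)) = M$, as claimed.

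I expect the only genuine subtlety to be the negative correction term $-\binom{k}{i+1}$: a priori it could cancel the leading positive contribution at the top degree and depress the last nonvanishing Betti number below $M$. What rules this out is exactly the inequality $M \ge k$, which guarantees that the correction term is already zero at $i = M$ (and beyond). So the heart of the argument is establishing $M \ge k$ — equivalently, that all type entries are positive — after which everything reduces to routine manipulation of the binomial formula.
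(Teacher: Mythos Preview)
Your argument is correct and follows exactly the route the paper intends: the paper's proof is the single line ``The conclusion follows from Theorem~\ref{thm_betti},'' and you have spelled out precisely how to read the projective dimension off that formula. In particular, your observation that $M \ge k$ (because each $a_j \ge 1$) is the point that makes the correction term $\binom{k}{i+1}$ vanish at $i = M$, which is the only nontrivial detail left implicit in the paper.
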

\begin{proof}
    The conclusion follows from Theorem \ref{thm_betti}.
\end{proof}

\begin{rem} Antonino Ficarra pointed out that Lemma 2.4 holds more generally for quadratic monomial ideals having linear free resolutions \cite[Lemma 2.4]{Fi}. Hence, we can extend our concept of type sequence to any quadratic monomial ideal with a linear free resolution. This fact also follows from taking the polarization. In particular, Theorem \ref{thm_betti} and Corollary \ref{cor_pd} hold for arbitrary quadratic monomial ideals having a linear free resolution.    
\end{rem}
\section{Cochordal zero divisor graphs}\label{sec_zero_div} This section is devoted to proving Theorem \ref{thm_cochordal_zero_div} and computing Betti numbers of edge ideals of zero divisor graphs of $\ZZ/n \ZZ$ when $n$ is of the form $n = p^a, n = p^a q$, or $n = pqr$ where $p,q,r$ are distinct prime numbers and $a$ is a positive integer. We first recall the definition of compressed zero divisor graphs.

Let $R$ be a commutative ring with unity. We denote by $Z^*(R)$ the set of nonzero zero divisors of $R$. For $x \in R$, we denote by $[x]$ the set of equivalence class of all $y$ in $R$ such that $\ann(x) = \ann(y)$. 

\begin{defn} The compressed zero divisor graph of $R$, denoted by $\Gamma_E(R)$, is a graph on the vertex set consisting of equivalence classes of elements in $Z^*(R)$ and two distinct equivalence classes $[x]$ and $[y]$ are joined by an edge if and only if $[x] \cdot [y] = [0]$.    
\end{defn}

We now prove that when $n = p^a, n = p^a q$, or $n = pqr$, $\Gamma(R)$ is cochordal and compute its type sequence. Each element of $R = \ZZ / n\ZZ$ has a representative $i$ such that $0 \le i \le n- 1$. Thus, we can view the vertex set of $\Gamma(R)$ as a subset of $\{1,\ldots, n-1\}$. We order the vertices of $\Gamma(R)$ within the same equivalence class by the natural order on $\ZZ$. We also use the following convention: if $\s_1, \ldots, \s_k$ are sequences, then $\s = (\s_k, \ldots, \s_1)$ denotes the concatenation of the sequences $\s_k, \ldots, \s_1$.

\begin{lem}\label{lem_cochord_zero_div_1} Let $p$ be a prime number and $a$ be a positive integer at least $2$. Let 
$$\s_i = (p^i - p^{a-i-1}-1, p^i - p^{a-i-1} -2, \ldots, p^i - p^{a-i}) \text{ for } i=\lceil \frac{a}{2} \rceil, \ldots,a-1.$$ 
Then $\Gamma(\ZZ_{p^a})$ is cochordal and has a type $\s = (\s_{a-1}, \ldots, \s_{\lceil \frac{a}{2} \rceil})$.
\end{lem}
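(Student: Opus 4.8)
The plan is to describe $\Gamma(\ZZ_{p^a})$ explicitly through $p$-adic valuations, to exhibit an explicit cochordal constructible system obtained by peeling vertices in order of decreasing valuation, and to read off its type.

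First I would fix notation. The nonzero zero divisors of $\ZZ_{p^a}$ are exactly the nonzero multiples of $p$, so writing $v(x)$ for the exponent of $p$ in $x$, the vertex set is partitioned into classes $V_1,\dots,V_{a-1}$ with $V_i=\{x : v(x)=i\}$; an elementary count gives $|V_i|=p^{a-i-1}(p-1)=:n_i$. Two distinct vertices $x,y$ are adjacent precisely when $v(x)+v(y)\ge a$, so a vertex of valuation $i$ is adjacent to every vertex of valuation at least $a-i$. In particular the vertices of valuation at least $\lceil a/2\rceil$ form a clique, while those of smaller valuation form an independent set.

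Next I would construct the system by peeling: remove the vertices of $V_{a-1}$ one at a time, then those of $V_{a-2}$, and so on down to $V_{\lceil a/2\rceil}$, never peeling a vertex of valuation below $\lceil a/2\rceil$. The crucial point is to check that at each stage the neighborhood of the vertex $v\in V_i$ being removed is a vertex cover of the current graph minus $v$. Since all vertices of valuation exceeding $i$ are already gone, the neighbors of $v$ are exactly the remaining vertices of valuation in $[a-i,i]$; and if $\{y,z\}$ is any edge of the current graph with $v(y)\le v(z)\le i$, then $v(y)\ge a-v(z)\ge a-i$, so $y$ has valuation in $[a-i,i]$ and hence $y\in N(v)$ whenever $y\ne v$. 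Thus $N(v)$ meets every edge not incident to $v$, which is the defining vertex-cover condition; the remaining requirements $u_i\notin U_j$ and $G=G_k$ are routine, the latter because every vertex $x$ of valuation below $\lceil a/2\rceil$ is adjacent to some vertex of valuation $a-v(x)\ge \lceil a/2\rceil$ and therefore appears in some $U_j$. By Theorem \ref{thm_cochordal_constructible} this simultaneously re-establishes that $\Gamma(\ZZ_{p^a})$ is cochordal.

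Finally I would compute the type. When peeling within $V_i$, the neighbors of the vertex removed are the still-present vertices of valuation in $[a-i,i]$; summing a geometric series shows there are $M_i:=p^i-p^{a-i-1}$ of these before any vertex of $V_i$ is removed, and each successive removal in $V_i$ lowers the count by one, so the neighborhood sizes run through $M_i-1,M_i-2,\dots,M_i-n_i$, which is exactly the block $\s_i=(p^i-p^{a-i-1}-1,\dots,p^i-p^{a-i})$. Concatenating the blocks in the order the classes are peeled gives $\type(\P)=(\s_{a-1},\dots,\s_{\lceil a/2\rceil})=\s$, as claimed. I expect the only real work to be the bookkeeping in the vertex-cover check and the count $M_i=p^i-p^{a-i-1}$; the one delicate point is the case of even $a$, where the last vertex peeled (in $V_{a/2}$) has empty neighborhood, producing the trailing entry $p^{a/2}-p^{a/2}=0$ of $\s_{\lceil a/2\rceil}$ and a degenerate final star $K_{u_1,\emptyset}$, which is allowed since $G_1$ is subject to no vertex-cover condition.
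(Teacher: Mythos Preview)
Your proof is correct and follows essentially the same route as the paper's: both partition the vertex set by $p$-adic valuation (equivalently, by the equivalence classes $[p^i]$), peel the vertices of $V_{a-1},V_{a-2},\dots,V_{\lceil a/2\rceil}$ one at a time, take each $U_j$ to be the neighborhood of the peeled vertex in the remaining graph, and compute $|U_j|$ via the geometric-series identity $\sum_{k=a-i}^{i}|V_k|=p^i-p^{a-i-1}$. Your explicit treatment of the degenerate final step when $a$ is even (yielding the trailing $0$ in $\s_{a/2}$) is a point the paper leaves implicit.
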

\begin{proof} Let $n = p^a$ and $R = \ZZ/n \ZZ$. For each $i = 1, \ldots, a-1$, we denote by 
$$V_i = \{x \in R \mid [x] = [p^i]\}.$$ 
First, we have $|V_i| = \varphi(p^{a-i}) = \varphi_i = p^{a-i-1}(p-1)$, where $\varphi(n)$ is the Euler's function. We assume that $V_i = \{v_{i,1}, \ldots, v_{i,\varphi_i}\}$ and $v_{i,j} < v_{i,j+1}$ for all $i$ and $j$. We now define 
    \begin{equation*}
        U_{i,j} = V_{i-1} \cup V_{i-2} \cup \cdots \cup V_{a -i} \cup \{ v_{i,1}, \ldots, v_{i,j-1} \}
    \end{equation*}
    for $i = \lceil \frac{a}{2} \rceil, \ldots, a-1$ and $j = 1, \ldots, \varphi_i$. Note that $\{v_{i_1,j_1},v_{i_2,j_2}\}$ is an edge of $\Gamma_n$ if and only if $i_1 + i_2 \ge a$. Hence, the ordered set $\P = (v_{i,j},U_{i,j} \mid i = a-1, \ldots, \lceil \frac{a}{2} \rceil, j = \varphi_i, \ldots, 1)$ is a cochordal constructible system of $\Gamma(\ZZ_n)$. By Theorem \ref{thm_cochordal_constructible}, $\Gamma(\ZZ_n)$ is cochordal. Finally, we have 
    \begin{align*}
        |U_{i,j}| & = |V_{i-1}| + \cdots + |V_{a-i}| + j-1\\
                   & = p^{a-i}(p-1) \left ( 1  + \cdots + p^{2i-a-1} \right ) + j - 1 \\
                   & = p^i - p^{a-i}  + j -1,
    \end{align*}
    for all $i = \lceil \frac{a}{2} \rceil, \ldots, a-1$ and $j = 1, \ldots, \varphi_i$. The conclusion follows.
\end{proof}

\begin{lem}\label{lem_cochord_zero_div_2} Let $p,q$ be distinct prime numbers and $a$ be a positive integer. Let 
\begin{align*}
    \s_i &= (qp^i - p^{a-i-1}-1, qp^i - p^{a-i-1} -2, \ldots, qp^i - p^{a-i}) \text{ for }i=\lceil \frac{a}{2} \rceil,\ldots, a-1,\\
    \s_i & = ((q-1)p^i + p^{a-i-1}(p-1) -1, (q-1)p^i + p^{a-i-1}(p-1) -2, \ldots, (q-1)p^i) \\
        & \text{ for } i = 0, \ldots, \lceil \frac{a}{2} \rceil - 1.
\end{align*} 
Then $\Gamma(\ZZ_{p^aq})$ is cochordal and has a type $\s = (\s_{a-1}, \ldots, \s_0)$.
\end{lem}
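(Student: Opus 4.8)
The plan is to follow exactly the blueprint established in Lemma~\ref{lem_cochord_zero_div_1}: identify the equivalence classes of $\Gamma(\ZZ_n)$ with $n = p^a q$, compute their sizes via Euler's function, and exhibit an explicit cochordal constructible system whose type is the claimed sequence $\s$. First I would set $R = \ZZ/n\ZZ$ and describe the vertices. A nonzero zero divisor $x$ is determined up to equivalence by $\gcd(x,n)$, which is of the form $p^i$ (for $i = 1,\ldots,a$, but note $p^a$ is a unit times $q$ so care is needed at the boundary) or $p^i q$ (for $i = 0,\ldots,a-1$). This splits the classes into two families, matching the two families of sequences $\s_i$ in the statement: the first family (indices $i = \lceil a/2\rceil,\ldots,a-1$) should correspond to classes $[p^i]$, whose annihilators force edges to classes $[x]$ with $\gcd(x,n)$ divisible by $p^{a-i}$; the second family (indices $i = 0,\ldots,\lceil a/2\rceil - 1$) should correspond to classes $[p^i q]$, whose annihilators are larger because the factor $q$ already contributes.

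Next I would compute the class sizes. For a class with $\gcd(x,n) = p^i$ the number of elements is $\varphi(n/p^i) = \varphi(p^{a-i}q) = p^{a-i-1}(p-1)(q-1)$, and for $\gcd(x,n) = p^i q$ it is $\varphi(p^{a-i}) = p^{a-i-1}(p-1)$. These counts, together with the edge rule $\{x,y\} \in E(\Gamma)$ iff $xy \equiv 0 \pmod{p^a q}$, i.e. iff the exponents of $p$ sum to at least $a$ and the factors of $q$ between them cover $q$, determine the adjacency completely. I would then order the classes by decreasing "strength" of their annihilator (equivalently increasing index, reading off the sequences in the stated concatenation order) and, within each class, by the natural order on $\ZZ$, exactly as the convention preceding the lemma prescribes. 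For each vertex $v_{i,j}$ I would define $U_{i,j}$ to be the union of all earlier-processed classes together with the already-processed vertices of the current class, and verify that $U_{i,j}$ is a vertex cover of the graph built so far.

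The cumulative-size computation is the step requiring the most care, since it is where the precise entries of $\s$ emerge. For the first family the sum telescopes as in Lemma~\ref{lem_cochord_zero_div_1}, now carrying the extra factor $(q-1)$ and the classes of type $p^i q$ that are annihilated by $p^i$; I expect $|U_{i,j}| = qp^i - p^{a-i} + j - 1$ to fall out after collecting a geometric series. For the second family the bookkeeping is genuinely more delicate: a vertex in class $[p^i q]$ is killed by every $y$ with $p$-adic valuation at least $a-i$, which now includes both $p^{a-i}$-type and $p^{\ge a-i}q$-type classes, and the vertex cover must account for all of them; I anticipate the closed form $|U_{i,j}| = (q-1)p^i + p^{a-i-1}(p-1) + j - 1$ coming from separating the contribution of the $q$-divisible neighbors from the purely $p$-power neighbors. \emph{The main obstacle} will be checking that the two families splice together correctly at the cutoff index $\lceil a/2\rceil$, so that the vertex-cover property holds across the boundary and no edge between a $p^i$-class and a $p^j q$-class is missed; once this is confirmed, Theorem~\ref{thm_cochordal_constructible} immediately yields cochordality and Lemma~\ref{lem_mul_x}'s setup gives the type, completing the proof.
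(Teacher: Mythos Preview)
Your outline has the right architecture (equivalence classes, Euler-$\varphi$ sizes, build $U_{i,j}$ as a running union), but the key identification of which classes supply the spine vertices $u_{i,j}$ is wrong, and this error propagates through the whole computation.

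In $R=\ZZ/p^aq\ZZ$ the classes are $U_i=[p^i]$ for $1\le i\le a$ and $V_i=[p^iq]$ for $0\le i\le a-1$. The adjacency is: $U_i\sim V_j$ iff $i+j\ge a$, $V_i\sim V_j$ iff $i+j\ge a$, and there are \emph{no} edges between any two $U$-classes (since $p^iq\cdot p^j$ is never divisible by $q$ unless the other factor already is). In particular the annihilator of $[p^i]$ consists of elements with $\gcd$ divisible by $p^{a-i}q$, not merely $p^{a-i}$ as you wrote. More importantly, the length of each block $\s_i$ in the statement is $p^{a-i-1}(p-1)=|V_i|$, whereas $|U_i|=p^{a-i-1}(p-1)(q-1)$; so the spine vertices for the first family cannot come from the classes $[p^i]$ as you propose, or the resulting type sequence would be $(q-1)$ times too long in those blocks. (Your parenthetical that ``$p^a$ is a unit times $q$'' is also false; $[p^a]$ is a genuine class of size $q-1$.)

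The paper's construction takes \emph{all} spine vertices from the $V$-classes, for every $i=0,\ldots,a-1$, setting
\[
W_{i,j}=U_a\cup U_{a-1}\cup\cdots\cup U_{a-i}\;\cup\;V_{i-1}\cup\cdots\cup V_{a-i}\;\cup\;\{v_{i,1},\ldots,v_{i,j-1}\}.
\]
The two formula cases in the statement arise not from two different kinds of spine classes but from whether the range $V_{i-1},\ldots,V_{a-i}$ is nonempty (i.e.\ whether $i\ge\lceil a/2\rceil$). With this correction your telescoping plan works: $|U_a|+\cdots+|U_{a-i}|=(q-1)p^i$ and $|V_{i-1}|+\cdots+|V_{a-i}|=p^i-p^{a-i}$, giving exactly the claimed entries.
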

\begin{proof} Let $n = p^aq$ and $R = \ZZ/n \ZZ$. We set
\begin{align*}
    U_i &= \{x \in R \mid [x] = [p^i]\} \text{ for } i = 1, \ldots, a,\\
    V_i &= \{x \in R \mid [x] = [p^iq]\} \text{ for } i = 0, \ldots, a-1.
\end{align*}
Then, $|U_a| = q-1$ and 
\begin{align*}
    |U_i| & = \varphi(p^{a-i}q) = p^{a-i-1}(p-1)(q-1) \text{ for } i =1, \ldots, a-1\\
    |V_i| & = \varphi(p^{a-i}) = \varphi_i = p^{a-i-1}(p-1) \text{ for } i = 0, \ldots, a-1. 
\end{align*}
We assume that $V_i = \{v_{i,1}, \ldots, v_{i,\varphi_i}\}$ and $v_{i,j} < v_{i,j+1}$ for all $i$ and $j$. We now define 
    \begin{equation*}
        W_{i,j} = U_{a} \cup U_{a-1} \cup \cdots \cup U_{a -i} \cup V_{i-1} \cup \cdots \cup V_{a-i} \cup  \{ v_{i,1}, \ldots, v_{i,j-1} \}
    \end{equation*}
    for $i = 0, \ldots, a-1$ and $j = 1, \ldots, \varphi_i$. For $x\in U_i$ and $y \in V_j$, we have $\{x,y\}$ is an edge of $\Gamma_n$ if and only if $i + j \ge a$. For $x \in V_i$ and $y \in V_j$, we have $\{x,y\}$ i an edge of $\Gamma_n$ if and only if $i + j \ge a$. Hence, the ordered set $\P = (v_{i,j},W_{i,j} \mid i = a-1, \ldots, 0, j = \varphi_i, \ldots, 1)$ is a cochordal constructible system of $\Gamma(\ZZ_n)$. By Theorem \ref{thm_cochordal_constructible}, $\Gamma(\ZZ_n)$ is cochordal. By the proof of Lemma \ref{lem_cochord_zero_div_1}, we have 
    \begin{align*}
        |W_{i,j}| & = |U_{a}| + \cdots + |U_{a-i}| + |V_{i-1}| + \cdots + |V_{a-i}| + j-1\\
                   & = (q-1) + (q-1)(p-1) ( 1 + \cdots + p^{i-1}) + p^i - p^{a-i} + j - 1 \\
                   & = (q-1)p^i + p^i - p^{a-i}  + j -1\\
                   & = qp^i - p^{a-i} + j -1,
    \end{align*}
    for all $i = \lceil \frac{a}{2} \rceil, \ldots, a-1$ and $j = 1, \ldots, \varphi_i$. For $i = 0, \ldots, \lceil \frac{a}{2} \rceil - 1$ and $j = 1, \ldots, \varphi_i$, we have 
    \begin{align*}
        |W_{i,j}| & = |U_{a}| + \cdots + |U_{a-i}| + j-1\\
                   & = (q-1)p^i + j -1.
    \end{align*}
    The conclusion follows.
\end{proof}

\begin{lem}\label{lem_cochord_zero_div_3} Let $p,q,r$ be distinct prime numbers. Let 
\begin{align*}
    \s_1 & = (r + (p-1)(q-1) -2, \ldots, (p-1)(q-1)) \\
    \s_2 & = (pr - p +q - 2, \ldots, pr - p) \\
    \s_3 & = (qr +p - 3, \ldots, qr - 1).
\end{align*}
Then $\Gamma(\ZZ_{pqr})$ is cochordal and has a type $\s = (\s_3, \s_2, \s_1).$
\end{lem}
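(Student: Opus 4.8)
The plan is to exhibit an explicit cochordal constructible system for $\Gamma(\ZZ_{pqr})$ and to read the type off the sizes of the successive cone sets, exactly as in the proofs of Lemmas \ref{lem_cochord_zero_div_1} and \ref{lem_cochord_zero_div_2}. Set $n=pqr$ and $R=\ZZ/n\ZZ$. First I would sort the nonzero zero divisors by their classes. Since $\ann(x)$ depends only on $\gcd(x,n)$, the classes are indexed by the proper nontrivial divisors $p,q,r,pq,pr,qr$; write $A_p,A_q,A_r$ for the ``singleton'' classes ($\gcd=p,q,r$) and $B_{pq},B_{pr},B_{qr}$ for the ``pair'' classes. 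Euler's function gives $|A_p|=(q-1)(r-1)$, $|A_q|=(p-1)(r-1)$, $|A_r|=(p-1)(q-1)$ and $|B_{qr}|=p-1$, $|B_{pr}|=q-1$, $|B_{pq}|=r-1$. The adjacency rule is clean: $xy\equiv 0\pmod n$ iff each of $p,q,r$ divides $x$ or divides $y$, i.e. two classes are joined precisely when the union of their prime supports is $\{p,q,r\}$. Hence each $A$-class is joined only to its complementary pair class, the three $B$-classes are pairwise completely joined, each $B$-class is in addition joined to its complementary $A$-class, and the three $A$-classes span an independent set; no class carries internal edges.

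Next I would take the pair-class vertices as the cone points of the system and the singleton-class vertices as the leaves they introduce. Fixing a linear order that lists all of $B_{qr}$, then all of $B_{pr}$, then all of $B_{pq}$ (with a chosen internal order in each class), let $u_j$ run through these vertices and set $U_j$ to be the set of neighbours of $u_j$ among the vertices already placed. The point to check at each stage is that $U_j$ is a monomial prime, equivalently a vertex cover of the graph built so far; this is immediate from the support-union rule, because the non-neighbours of a pair-class vertex are exactly the still-unplaced vertices of its own class together with the two non-complementary $A$-classes, and that set is independent. This is the concrete instance of Lemma \ref{lem_cochordal_cover_ideal} at each step, and by Theorem \ref{thm_cochordal_constructible} the resulting $\P$ certifies that $\Gamma(\ZZ_{pqr})$ is cochordal.

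It then remains to evaluate $\type(\P)=(a_k,\dots,a_1)$ with $a_j=|U_j|$. Here $|U_j|=\deg(u_j)-c_j$, where $c_j$ is the number of earlier cone points adjacent to $u_j$. The degrees are $qr-1$ on $B_{qr}$, $pr-1$ on $B_{pr}$, $pq-1$ on $B_{pq}$, their neighbours being the multiples of $p$, of $q$, of $r$ respectively; and $c_j$ is governed entirely by the complete-tripartite structure on $B_{qr}\cup B_{pr}\cup B_{pq}$, since no pair-class has internal edges while any two distinct pair-classes are completely joined. Organising this count class by class, in the style of the $|W_{i,j}|$ computation of Lemma \ref{lem_cochord_zero_div_2}, yields the three blocks $\s_3,\s_2,\s_1$, and feeding the type into Theorem \ref{thm_betti} will give the graded Betti numbers.

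The step I expect to be the main obstacle is precisely this last bookkeeping. Because the pair-classes are mutually completely joined, the subtraction $c_j$ jumps as one passes from one $B$-class to the next, so the offset at which each block begins is a \emph{cumulative} count over all previously coned pair-vertices rather than a purely internal index; meanwhile, since a single pair-class has no internal edges, the within-class behaviour must be tracked separately. Pinning down these cumulative offsets, and thereby the exact first and last entries of $\s_3$, $\s_2$, $\s_1$, is where the real care is needed. By contrast the cochordality itself, the determination of the six classes and their sizes, and the three degree computations are all routine once the support-union adjacency rule is in hand.
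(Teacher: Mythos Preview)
Your overall plan is the paper's plan: split $Z^*(R)$ into the six annihilator classes $U_1=[p]$, $U_2=[q]$, $U_3=[r]$ and $V_1=[pq]$, $V_2=[pr]$, $V_3=[qr]$, take the $V_i$ as the cone points (inner to outer $V_1,V_2,V_3$), and read the type from the sizes of the cone sets. The paper's cone sets are
\[
W_{1,j}=U_3\cup\{v_{1,1},\dots,v_{1,j-1}\},\quad
W_{2,j}=V_1\cup U_2\cup\{v_{2,1},\dots,v_{2,j-1}\},\quad
W_{3,j}=V_1\cup V_2\cup U_1\cup\{v_{3,1},\dots,v_{3,j-1}\},
\]
and the extra ``$+\,(j-1)$'' coming from the same-class tail $\{v_{i,1},\dots,v_{i,j-1}\}$ is exactly what makes each block $\s_1,\s_2,\s_3$ strictly decreasing.

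The step in your proposal that would fail is the final claim that your count ``yields the three blocks $\s_3,\s_2,\s_1$''. You define $U_j$ to be the \emph{neighbours} of $u_j$ among the already-placed vertices, and you correctly observe that no pair class carries internal edges. Those two facts together force $v_{i,1},\dots,v_{i,j-1}\notin U_j$, so the ``$+\,(j-1)$'' never appears in your count. Concretely, your $|U_j|$ is constant on each $V_i$: it equals $qr-1$ on $V_3$, $pr-p$ on $V_2$, and $(p-1)(q-1)$ on $V_1$. For $(p,q,r)=(2,3,5)$ your system has type $(14,8,8,2,2,2,2)$ with edge total $\sum a_j=38$, while the stated $\s=(\s_3,\s_2,\s_1)=(14,9,8,5,4,3,2)$ sums to $45$. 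Thus your bookkeeping, done exactly as you describe, does not reproduce the lemma's $\s_1,\s_2,\s_3$; to obtain those decreasing blocks one must, as the paper does, throw the earlier same-class vertices into each $W_{i,j}$---which is precisely what your neighbour-only recipe (in line with your own adjacency analysis) rules out. This is not a cosmetic indexing issue: the two choices of cone set give genuinely different type sequences, and only one of them can be a type of $\Gamma(\ZZ_{pqr})$.
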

\begin{proof} Let $n = pqr$ and $R = \ZZ/n \ZZ$. We assume that $p < q <r$ and set 
\begin{align*}
U_1 &= \{x \in R\mid [x] = [p], U_2 = \{x \in R \mid [x] = [q] \}, U_3 = \{x \in R \mid [x] = [r]\}\\
V_1 &= \{x\in R \mid [x] = [pq]\}, V_2 = \{x \in R \mid [x] = [pr]\}, V_3 = \{x \in R \mid [x] = [qr]\}.
\end{align*}
Let $\varphi_1 = |V_1| = r-1$, $\varphi_2 = |V_2| = q-1$, and $\varphi_3 = |V_3| = (p-1)$. Write $V_i = \{v_{i,1}, \ldots, v_{i,\varphi_i}\}$ and $v_{i,j} < v_{i,j+1}$ for all $i=1, 2,3$ and $j = 1, \ldots, \varphi_i$. We now define 
    \begin{align*}
        W_{1,j} & = U_{3} \cup  \{ v_{1,1}, \ldots, v_{1,j-1} \} \text{ for } j = 1, \ldots, \varphi_1\\
        W_{2,j} & = V_1 \cup U_2 \cup \{ v_{2,1},\ldots,v_{2,j-1} \} \text{ for } j=1, \ldots, \varphi_2 \\
        W_{3,j} & = V_1 \cup V_2 \cup U_1 \cup \{v_{3,1}, \ldots, v_{3,j-1} \} \text{ for } j = 1, \ldots, \varphi_3. 
    \end{align*}
    Then the ordered set $\P = (v_{i,j},W_{i,j} \mid i = 3, 2, 1, j = \varphi_i, \ldots, 1)$ is a cochordal constructible system of $\Gamma(\ZZ_n)$. By Theorem \ref{thm_cochordal_constructible}, $\Gamma(\ZZ_n)$ is cochordal. Finally, we have 
    \begin{align*}
        |W_{1,j}| & = |U_{3}| + j-1 = (p-1)(q-1) + j -1 \text{ for } j = 1, \ldots, \varphi_1,\\
        |W_{2,j}| & = |V_1| + |U_2| + j - 1 = pr -p +j-1 \text{ for } j = 1, \ldots, \varphi_2,\\
        |W_{3,j}| & = |V_1| + |V_2| + |U_1|  + j -1 = qr + j -2 \text{ for } j = 1, \ldots, \varphi_3.
    \end{align*}
    The conclusion follows.
\end{proof}
We are now ready for 
\begin{proof}[Proof of Theorem \ref{thm_cochordal_zero_div}] First, we assume that $G = \Gamma(\ZZ_n)$ is cochordal. In particular, its induced matching number is $1$. We divide the proof into several steps.

\smallskip

\noindent \textbf{Step 1.} $n$ has at most $3$ prime factors. Assume by contradiction that $n$ has more than $3$ prime factors, say $n = p_1^{a_1}p_2^{a_2} p_3^{a_3} p_4^{a_4} m$ where $m$ is a positive integer and $p_1,\ldots,p_4$ are distinct prime numbers not dividing $m$. Let $x = p_1^{a_1}p_2^{a_2} m$, $y = p_3^{a_3}p_4^{a_4} m$, $u = p_1^{a_1} p_3^{a_3} m$ and $v = p_2^{a_2} p_4^{a_4} m$. Then $\{x,y\}$ and $\{u,v\}$ form an induced matching of $G$, which is a contradiction.

\smallskip

\noindent \textbf{Step 2.} If $n$ has $3$ prime factors then $n = pqr$. Assume by contradiction that $n = p^a q^b r^c$ for some $a > 1$ and $b,c$ are positive integers. Let $x = p^a$, $y = q^br^c$, $u = p^{a-1} q^b$, and $v = p r^c$. Then $\{x,y\}$ and $\{u,v\}$ form an induced matching of $G$, a contradiction.

\smallskip

\noindent \textbf{Step 3.} If $n$ has $2$ prime factors then $n = p^a q$. Assume by contradiction that $n = p^a q^b$ where $a,b > 1$ are positive integers. Let $x = p^a$, $y = q^b$, $u = p^{a-1} q^{b-1}$ and $v = pq w$ where $w$ is any prime divisor of $pq-1$. Then $u \neq v$ and $\{x,y\}$ and $\{u,v\}$ form an induced matching of $G$, a contradiction.  

The conclusion then follows from Lemma \ref{lem_cochord_zero_div_1}, Lemma \ref{lem_cochord_zero_div_2}, and Lemma \ref{lem_cochord_zero_div_3}.
\end{proof}

We now apply Theorem \ref{thm_betti} to compute all the Betti numbers of $\Gamma(\ZZ_n)$ when $n$ is of the form $n = p^a$, $n = p^aq$, or $n = pqr$.

\begin{proof}[Proof of Theorem \ref{thm_betti_div_1}]
    Let $\s_j = (s_{j,1}, \ldots, s_{j,\varphi_j})$ with $\varphi_j = p^{a-j-1}(p-1)$ and     \begin{align*}
        s_{j,1} & = p^j - p^{a-j-1} - 1 \\
        s_{j,\ell} & = s_{j,\ell-1} - 1 \text{ for } \ell = 2, \ldots, \varphi_j.
    \end{align*} 
    By Lemma \ref{lem_cochord_zero_div_1}, $\Gamma(\ZZ_n)$ has a type $\s = (\s_{a-1}, \ldots, \s_{\lceil \frac{a}{2} \rceil })$. By Theorem \ref{thm_betti}, we deduce that 
    $$\beta_i(S/I) = \sum_{j= \lceil \frac{a}{2} \rceil}^{a-1} \varphi_j \binom{s_{j,1} + \sum_{\ell = j+1}^{a-1} \varphi_\ell}{i}.$$
    For $j < a-1$, we have 
    \begin{equation}\label{eq_sum}
        \sum_{\ell = j+1}^{a-1} \varphi_\ell = (p-1) \sum_{\ell = j+1}^{a-1} p^{a-\ell - 1} = p^{a-j-1} - 1.
    \end{equation}
    The conclusion follows.
\end{proof}

\begin{proof}[Proof of Theorem \ref{thm_betti_div_2}] Let $\s_j = (s_{j,1}, \ldots, s_{j,\varphi_j})$ with $\varphi_j = p^{a-j-1}(p-1)$ be such that $s_{j,\ell} = s_{j,\ell-1} - 1$ for $\ell = 2, \ldots, \varphi_j$ and 
\begin{align*}
s_{j,1} &= qp^j - p^{a-j-1} - 1 \text{ for } j = \lceil \frac{a}{2} \rceil, \ldots, a-1\\
s_{j,1} & = (q-1)p^j + p^{a-j-1}(p-1) - 1 \text{ for } j = 0, \ldots, \lceil \frac{a}{2} \rceil - 1.
\end{align*}
By Lemma \ref{lem_cochord_zero_div_2}, Theorem \ref{thm_betti}, and the proof of Theorem \ref{thm_betti_div_1}, we deduce that
    $$\beta_i(S/I) = \sum_{j=0}^{a-1} \varphi_j \binom{s_{j,1} + \sum_{\ell = j+1}^{a-1} \varphi_\ell}{i}.$$
    The conclusion then follows from Eq. \eqref{eq_sum}.
\end{proof}

\begin{proof}[Proof of Theorem \ref{thm_betti_div_3}]
    The conclusion follows from Lemma \ref{lem_cochord_zero_div_3} and Theorem \ref{thm_betti}.
\end{proof}

\begin{rem} Rather, Imran, and Pirzada \cite{RIP} and Pirzada and Rather \cite{PR} used Hochster's formula \cite[Theorem 8.1.1]{HH} to compute the Betti numbers of the edge ideals of $\Gamma_n$ when $n$ is of the form $p^a, p^{a}q$, and $pqr$. The authors provided closed formulae when $a \le 4$ in the first case and $a \le 2$ in the second case. Our method is entirely different and yields a complete answer. Additionally, our formulae are simpler. By Theorem \ref{thm_betti_div_3}, the last Betti number of $\Gamma_{pqr}$ is $p-1$. Hence, the the first statement in \cite[Proposition 4.8]{RIP} is incorrect.    
\end{rem}

\begin{rem} Theorem \ref{thm_betti} is useful for calculating the Betti numbers of other classes of cochordal graphs. We will explore this further in subsequent work.    
\end{rem}
\section*{Acknowledgments} Part of this work was developed by Professor Nguyen Cong Minh and the second author in a separate project. We extend our gratitude to Professor Nguyen Cong Minh for kindly allowing us to include some material in this project and for his valuable suggestions. Part of this work was completed during the first author’s visit to the Vietnam Institute for Advanced Study in Mathematics (VIASM). He wishes to express his gratitude to VIASM for its hospitality.

\end{document}